\newsavebox{\wwide}
\newcommand{\wwidehat}[1]{\sbox{\wwide}{$#1$}
\ifdim\wd\wwide < 1.1 em \widehat{#1} \else
\setlength
{\unitlength}{0.01\wd\wwide}\overset
{\begin{picture}(100,6)
\path(0,0)(50,6)(100,0)
\end{picture}}{#1}\fi}
\@undefined\usepackage[usenames,dvips]{color}
\else\usepackage[usenames,dvipsnames]{color}
\definecolor{ChadDarkBlue}{rgb}{.1,0,.2}  
\definecolor{ChadBlue}{rgb}{.1,.1,.5}  
\definecolor{ChadRoyal}{rgb}{.2,.2,.8}  
\definecolor{ChadGreen}{rgb}{0,.6,0}    
\definecolor{ChadRed}{rgb}{.5,0,.5}  
\def\oprava#1{{#1}} 
\def\zmena#1{{#1}} 
\def\autori#1{{#1}} 
\def\smallskip{\vskip\smallskipamount}
\def\medskip{\vskip\medskipamount}
\def\bigskip{\vskip\bigskipamount}
\newtheorem{theorem}{Theorem}[section]
\newtheorem{lemma}[theorem]{Lemma}
\newtheorem{statement}[theorem]{Statement}
\newtheorem{proposition}[theorem]{Proposition}
\newtheorem{corollary}[theorem]{Corollary}
\newtheorem{example}[theorem]{Example}
\newtheorem{definition}[theorem]{Definition}
\newtheorem{observation}[theorem]{Observation}
\newtheorem{open}[theorem]{Open problem}
\def\operatorname#1{{\mathop{\rm #1}}}
\newcommand{\ord}{\operatorname{ord}}
\def\comp{\leftrightarrow}
\newcounter{ok}
{\end{list}}
\newcounter{aok}
{\end{list}}
\def\go#1;#2;#3 {\vbox to0pt{\kern-#3\hbox{\kern#2 #1}\vss}\nointerlineskip}
\newcommand{\Mea}{{\text{\rm{}M}}}
\newcommand{\HMea}{{\text{\rm{}HM}}}
\newcommand{\UMea}{{\text{\rm{}UM}}}
\newcommand{\C}{\text{\rm{}C}}
\newcommand{\Sh}{\text{\rm{}S}}
\newcommand{\itrm}[1]{\item[{\rm {#1}}]}
\begin{document}

\title{Homogeneous orthocomplete effect algebras are covered by MV-algebras}

\author[jp]{Josef~Niederle}
\ead{niederle@math.muni.cz}

\author[jp]{Jan~Paseka\corref{cor1}} 
\ead{paseka@math.muni.cz}
\address[jp]{Department of Mathematics and Statistics,
			Faculty of Science,
			Masaryk University,
			{Kotl\'a\v r{}sk\' a\ 2},
			CZ-611~37~Brno, Czech Republic}        	

\cortext[cor1]{Corresponding author}

\date{Received: date / Accepted: date}


\begin{abstract}
The aim of our paper is twofold. First, 
we thoroughly study the set of meager elements $\Mea(E)$ 
and the set of hypermeager elements $\HMea(E)$ 
in the setting of homogeneous effect algebras $E$. 
Second, we study the  property (W+) and the maximality property
introduced by Tkadlec   as common 
generalizations of orthocomplete and lattice effect algebras. 
We show that every block of an Archimedean homogeneous effect algebra 
satisfying the property (W+) is lattice ordered. Hence 
such effect algebras can be covered by ranges of observables. 
As a corollary, this yields that every block of a homogeneous orthocomplete effect algebra is lattice ordered. 
Therefore finite homogeneous effect algebras  are covered by MV-algebras.
\end{abstract}

\begin{keyword}{homogeneous effect algebra \sep
orthocomplete effect algebra \sep
lattice effect algebra \sep center \sep atom 
\sep sharp element \sep meager element
\sep hypermeager element \sep ultrameager element}
\end{keyword}

\maketitle

\zmena{\section*{Introduction}}

\label{intro}
The history of quantum structures started at the beginning of the 20th century.
Observable events constitute a Boolean algebra in a classical physical system.
Because event structures in quantum mechanics cannot be described by Boolean algebras, 
Birkhoff and von Neumann introduced orthomodular lattices
which were considered as the standard quantum logic. Later on, orthoalgebras  were introduced
as the generalizations of orthomodular posets, which were considered
as "sharp"  quantum logic.

In the nineties of the twentieth century, two
equivalent quantum structures, D-posets and effect algebras were extensively
studied, which were considered as "unsharp" generalizations of the
structures which arise in quantum mechanics, in particular, of orthomodular
lattices and MV-algebras.

Effect algebras are fundamental in investigations of fuzzy probability
theory. In the fuzzy probability frame, the elements of an effect algebra
represent fuzzy events which are used to construct fuzzy random variables.

In the  present  paper,  we continue the study of 
homogeneous  effect algebras started in \cite{jenca2001}.  
This class of effect algebras includes
orthoalgebras,  lattice ordered effect  algebras and  effect  algebras satisfying  the Riesz decomposition  property.

In \cite{jenca2001} it was proved that  every homogeneous 
effect  algebra is a union of its blocks, which are defined as maximal sub-effect  algebras satisfying  the Riesz decomposition property.  
In \cite{tkadlec} Tkadlec introduced the  property (W+) as a common 
generalization of orthocomplete and lattice effect algebras.

Rie\v{c}anov\'{a} in \cite{ZR56} proved one of the most important results in the theory 
of effect algebras that each lattice ordered effect algebra 
can be covered by MV-subalgebras which form
blocks. Dvure{\v{c}}enskij extended  in \cite{dvurec2} this result 
for effect algebras with the Riesz interpolation property and with the decomposition-meet property. 
Pulmannov\'{a} \cite{pulmblok} proved that  every homogeneous effect algebra  
$E$  such that every block $B$ of  $E$   satisfies  the decomposition-meet property can  be covered  by   MV-algebras.

The aim of our paper is to show that every block of an Archimedean homogeneous effect  algebra satisfying the property (W+) is lattice ordered. Hence Archimedean homogeneous effect algebras 
fulfilling the condition (W+) can  be  covered  by  ranges of observables.
As a corollary, this yields that every block of a homogeneous orthocomplete effect algebra is lattice ordered. Therefore finite homogeneous effect algebras  are covered by MV-algebras which form blocks.

As a by-product of our study we extend the results on sharp and meager elements 
of \cite{jenca} 
into the realm of  Archimedean homogeneous effect  algebras 
satisfying the property (W+). We also thoroughly study the set of meager elements $\Mea(E)$ 
and the set of hypermeager elements $\HMea(E)$ 
in the setting of homogeneous effect algebras $E$.

\medskip

\section{{Preliminaries} and basic facts}

\zmena{Effect algebras were introduced by Foulis and 
 Bennett (see \cite{FoBe}) 
for modelling unsharp measurements in a Hilbert space. In this case the 
set $\EuScript E(H)$ of effects is the set of all self-adjoint operators $A$ on 
a Hilbert space $H$ between the null operator $0$ and the identity 
operator $1$ and endowed with the partial operation $+$ defined 
iff  $A+B$ is in $\EuScript E(H)$, where $+$ is the usual operator sum. }

\zmena{In general form, an effect algebra is in fact a partial algebra 
with one partial binary operation and two unary operations satisfying 
the following axioms due to Foulis and 
 Bennett.}
 
 The basic reference for the present text is the classic
book by Dvure{\v{c}}enskij and Pulmannov\'{a} \cite{dvurec}, where the interested reader can
find  unexplained terms and notation concerning the subject. 

\begin{definition}\label{def:EA}{\autori{{\rm{}\cite{FoBe,ZR62}  }}
{\rm A partial algebra $(E;\oplus,0,1)$ is called an {\em effect algebra} if
$0$, $1$ are two distinct elements, 
called the {\em zero} and the {\em unit}  element,  and $\oplus$ is a partially
defined binary operation called the {\em orthosummation} 
on $E$ which satisfy the following
conditions for any $x,y,z\in E$:
\begin{description}
\item[\rm(Ei)\phantom{ii}] $x\oplus y=y\oplus x$ if $x\oplus y$ is defined,
\item[\rm(Eii)\phantom{i}] $(x\oplus y)\oplus z=x\oplus(y\oplus z)$  if one
side is defined,
\item[\rm(Eiii)] for every $x\in E$ there exists a unique $y\in
E$ such that $x\oplus y=1$ (we put $x'=y$),
\item[\rm(Eiv)\phantom{i}] if $1\oplus x$ is defined then $x=0$.
\end{description}
}%
{\rm{}$(E;\oplus,0,1)$ is  called an {\em orthoalgebra} if 
$x\oplus x$ exists implies that $x= 0$  (see \cite{grerut}).}}
\end{definition}

We often denote the effect algebra $(E;\oplus,0,1)$ briefly by
$E$. On every effect algebra $E$  a partial order
$\le$  and a partial binary operation $\ominus$ can be 
introduced as follows:
\begin{center}
$x\le y$ \mbox{ and } {\autori{$y\ominus x=z$}} \mbox{ iff }$x\oplus z$
\mbox{ is defined and }$x\oplus z=y$\,.
\end{center}

If $E$ with the defined partial order is a lattice (a complete
lattice) then $(E;\oplus,0,1)$ is called a {\em lattice effect
algebra} ({\em a complete lattice effect algebra}).

Mappings from one effect algebra to
another one that preserve units and orthosums are called 
{\em morphisms of effect algebras}, and bijective
morphisms of effect algebras having inverses that are 
morphisms of effect algebras are called 
{\em isomorphisms of effect algebras}.

\begin{definition}\label{subef}{\rm
Let $E$ be an  effect algebra.
Then $Q\subseteq E$ is called a {\em sub-effect algebra} of  $E$ if 
\begin{enumerate}
\item[(i)] $1\in Q$
\item[(ii)] if out of elements $x,y,z\in E$ with $x\oplus y=z$
two are in $Q$, then $x,y,z\in Q$.
\end{enumerate}
If $E$ is a lattice effect algebra and $Q$ is a sub-lattice and a sub-effect
algebra of $E$, then $Q$ is called a {\em sub-lattice effect algebra} of $E$.}
\end{definition}

Note that a sub-effect algebra $Q$ 
(sub-lattice effect algebra $Q$) of an  effect algebra $E$ 
(of a lattice effect algebra $E$) with inherited operation 
$\oplus$ is an  effect algebra (lattice effect algebra) 
in its own right.

For an element $x$ of an effect algebra $E$ we write
$\ord(x)=\infty$ if $nx=x\oplus x\oplus\dots\oplus x$ ($n$-times)
exists for every positive integer $n$ and we write $\ord(x)=n_x$
if $n_x$ is the greatest positive integer such that $n_xx$
exists in $E$.  An effect algebra $E$ is {\em Archimedean} if
$\ord(x)<\infty$ for all $x\in E$.

A minimal nonzero element of an effect algebra  $E$
is called an {\em atom}  and $E$ is
called {\em atomic} if under every nonzero element of 
$E$ there is an atom.

\begin{definition}
\rm
We say that a finite system $F=(x_k)_{k=1}^n$ of not necessarily
different elements of an effect algebra $E$ is
\zmena{\em orthogonal} if $x_1\oplus x_2\oplus \cdots\oplus
x_n$ (written $\bigoplus\limits_{k=1}^n x_k$ or $\bigoplus F$) exists
in $E$. Here we define $x_1\oplus x_2\oplus \cdots\oplus x_n=
(x_1\oplus x_2\oplus \cdots\oplus x_{n-1})\oplus x_n$ supposing
that $\bigoplus\limits_{k=1}^{n-1}x_k$ is defined and
$(\bigoplus\limits_{k=1}^{n-1}x_k)\oplus x_n$ exists. We also define 
$\bigoplus \emptyset=0$.
An arbitrary system
$G=(x_{\kappa})_{\kappa\in H}$ of not necessarily different
elements of $E$ is called \zmena{\em orthogonal} if $\bigoplus K$
exists for every finite $K\subseteq G$. We say that for a \zmena{orthogonal} 
system $G=(x_{\kappa})_{\kappa\in H}$ the
element $\bigoplus G$ exists iff
$\bigvee\{\bigoplus K
\mid
K\subseteq G$ is finite$\}$ exists in $E$ and then we put
$\bigoplus G=\bigvee\{\bigoplus K\mid K\subseteq G$ is
finite$\}$. We say that $\bigoplus G$ is the {\em orthogonal sum} 
of $G$ and $G$ is {\em orthosummable}. (Here we write $G_1\subseteq G$ iff there is
$H_1\subseteq H$ such that $G_1=(x_{\kappa})_{\kappa\in
H_1}$).
{
We denote $G^\oplus:=\{\bigoplus K\mid K\subseteq G$ is   
finite$\}$.}
\end{definition}

\begin{definition}
\rm
$E$ is called \emph{orthocomplete} if  every orthogonal 
system  is orthosummable.
\end{definition}

Every orthocomplete effect algebra is
Archimedean.

{\renewcommand{\labelenumi}{{\normalfont  (\roman{enumi})}}
\begin{definition}
\rm
An element $x$ of an effect algebra $E$ is called  
\begin{enumerate}
\item  \emph{sharp} if $x\wedge x'=0$. The set 
$\Sh(E)=\{x\in E \mid x\wedge x'=0\}$ is called a \emph{set of all sharp elements} 
of $E$ (see \cite{gudder1}).
\item  \emph{principal}, if $y\oplus z\leq x$ for every $y, z\in E$ 
such that $y, z \leq x$ and $y\oplus z$ exists.
\item  \emph{central}, if $x$ and $x'$ are principal and,
for every $y \in E$ there are $y_1, y_2\in E$ such that 
$y_1\leq x, y_2\leq x'$, and $y=y_1\oplus y_2$  (see \cite{GrFoPu}). 
The \emph{center} 
$\C(E)$ of $E$ is the set of all central elements of $E$.
\end{enumerate}
\end{definition}

If $x\in E$ is a principal element, then $x$ is sharp and the interval 
$[0, x]$ is an effect algebra with the greatest element $x$ and the partial 
operation given by restriction of $\oplus$ to $[0, x]$. 

\begin{observation}
Clearly, $E$ is an orthoalgebra if and only if $\Sh(E)=E$.
\end{observation}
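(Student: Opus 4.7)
The plan is to unfold the definitions and verify both implications by elementary manipulations with the partial operation $\oplus$, using only commutativity, associativity and cancellativity. The observation is essentially a direct calculation; no structural theory beyond Definition~\ref{def:EA} is required.

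For the forward direction, I would fix $x\in E$ and pick any common lower bound $y$ of $x$ and $x'$, aiming to show $y=0$. Writing $x=y\oplus u$ and $x'=y\oplus v$ for suitable $u,v$, the identity $x\oplus x' = 1$ becomes $(y\oplus u)\oplus(y\oplus v)=1$. By (Ei) and (Eii) I would reassociate this as $(y\oplus y)\oplus(u\oplus v)=1$, which in particular asserts that $y\oplus y$ exists. The orthoalgebra axiom ($x\oplus x$ defined implies $x=0$) then forces $y=0$, and hence $x\wedge x'=0$, so $x\in\Sh(E)$.

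For the converse, I would assume $\Sh(E)=E$ and show that if $x\oplus x$ exists then $x=0$. The key preparatory remark is the equivalence ``$x\oplus x$ exists iff $x\le x'$'': from $x\oplus x = a$ one obtains $1 = x\oplus x' = (x\oplus x)\oplus a'$, and cancellativity of $\oplus$ yields $x' = x\oplus a'$, i.e.\ $x\le x'$. Consequently $x$ is a lower bound of both $x$ and $x'$; since any lower bound of $\{x,x'\}$ is in particular $\le x$, it is the greatest one, so $x\wedge x'$ exists and equals $x$. Sharpness of $x$ then gives $x=0$.

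There is no genuine obstacle here; the only minor point that must be handled cleanly is the implicit use of cancellativity in effect algebras and the verification that the rearrangement $(y\oplus u)\oplus(y\oplus v)=(y\oplus y)\oplus(u\oplus v)$ is legitimate, which follows from repeated application of (Ei) and (Eii) once one knows the left-hand side is defined.
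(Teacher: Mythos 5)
Your proof is correct: the forward direction (reassociating $1=(y\oplus u)\oplus(y\oplus v)$ to see that $y\oplus y$ is defined for any common lower bound $y$ of $x,x'$) and the converse (observing that $x\oplus x$ defined gives $x\le x'$, hence $x=x\wedge x'=0$ by sharpness) are exactly the intended elementary argument. The paper offers no proof beyond the word ``Clearly,'' so your unfolding of Definition~\ref{def:EA} is precisely the justification being taken for granted, and there is nothing further to compare.
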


\begin{statement}{\rm \cite[Theorem 5.4]{GrFoPu}}
The center $\C(E)$ of an effect algebra $E$ is a sub-effect 
algebra of $E$ and forms a Boolean algebra. For every
central element $x$ of $E$, $y=(y\wedge x)\oplus (y\wedge x')$ for all 
$y\in E$.  If $x, y\in \C(E)$ are orthogonal, we have 
$x\vee y =  x\oplus y$  and $x\wedge y =  0$. 
\end{statement}

{\renewcommand{\labelenumi}{{\normalfont  (\roman{enumi})}}
\begin{statement}{\rm\cite[Lemma   3.1.]{jencapul}}\label{gejzapulm} Let $E$ be an effect algebra, 
$x, y\in E$ and $c, d\in \C(E)$. Then:
\begin{enumerate}
\item If $x\oplus y$ exists then  $c\wedge (x\oplus y)=(c\wedge x)\oplus (c\wedge y)$.
\item If $c\oplus d$ exists then  $x\wedge (c\oplus d)=(x\wedge c)\oplus (x\wedge d)$.  
\end{enumerate}
\end{statement}

\begin{definition}
\rm
A subset $M$ of an effect algebra $E$ is called 
\emph{compatible} (\emph{internally compatible}) if for every finite 
subset $M_F$ of $M$  there is a finite orthogonal family $(x_1, \dots, x_n)$ 
of elements in $E$ (in $M$) such that for every $m\in  M_F$ 
there is a set $A_F\subseteq \{ 1, \dots, n \}$ with 
$m =\bigoplus_{i\in A_F} x_i$. If $\{ x, y \}$ is a compatible set, 
we write $x\comp y$ (see \cite{jenca, Kop2}).
\end{definition}

Evidently, $x\comp y$ iff there are $p, q, r\in E$ such 
that $x=p\oplus q$, $y=q\oplus r$ and $p\oplus q\oplus r$ exists iff 
there are $c, d\in E$ such that $d\leq x\leq c$, $d\leq y\leq c$ and
$c\ominus x=y\ominus d$. 
Moreover, if $x\wedge y$ exists then $x\comp y$ iff 
$x\oplus (y\ominus (x\wedge y))$ exists.

\begin{definition}\label{rdp}{\rm
An effect algebra $E$  satisfies the \emph{Riesz decomposition property}
(or RDP) if, for all $u,v_1, v_2\in E$ such that
$u\leq v_1\oplus v_2$, there are $u_1, u_2$ such that 
$u_1\leq v_1, u_2\leq v_2$ and $u=u_1\oplus u_2$. 

An effect algebra $E$ is called \emph{homogeneous} if, 
for all $u,v_1,v_2\in E$ such that $u\leq v_1\oplus v_2\leq u'$, 
there are $u_1,u_2$ such that $u_1\leq v_1$, $u_2\leq v_2$ 
and $u=u_1\oplus u_2$ (see \cite{jenca2001}).

 An effect algebra $E$  satisfies the \emph{difference-meet property}  
(or DMP)  if, for all $x, y, z\in E$ such that $x\leq y$, $x\wedge z\in E$   
and  $y \wedge z\in E$,  then  $(y \ominus x) \wedge z  \in  E$ 
(see \cite{dvurec2}).}
\end{definition}

\begin{statement}\label{gejzablok}{\rm\cite[Proposition 2.3]{jenca2001}}
Let  $E$  be a homogeneous effect algebra.   Let  $u,v_1,\dots ,v_n\in  E$  
be  such  that  $v_1\oplus \dots \oplus v_n$   exists,  
$u\leq v_1\oplus \dots \oplus v_n\leq u'$.   Then there
are $u_1,\dots ,u_n$ such that, for all  
$1\leq i\leq  n$,  $u_i\leq v_i$ and  
$u  =  u_1\oplus \dots \oplus u_n$.
\end{statement}

\begin{statement}\label{gejzasum}{\rm\cite[Proposition 2]{jenca}}
\nopagebreak
\begin{enumerate}
\item[{\rm (i)}]    Every orthoalgebra is homogeneous.
\itrm{(ii)}    Every lattice effect algebra is homogeneous.
\itrm{(iii)}    An effect algebra  $E$ has the Riesz decomposition 
property if and only if  $E$ is homogeneous and compatible.

Let $E$ be a homogeneous effect algebra.
\itrm{(iv)}    A subset $B$ of $E$ is a maximal sub-effect 
algebra of $E$ with the Riesz decomposition property (such  
$B$ is called a {\em block} of  $E$) if and only if  $B$ 
is a maximal internally compatible subset of $E$ containing $1$.
\itrm{(v)} Every finite compatible subset of $E$ is a subset of 
some block. This implies that every homogeneous effect algebra 
is a union of its blocks.
\itrm{(vi)}    $\Sh(E)$ is a sub-effect algebra of $E$.
\itrm{(vii)}    For every block $B$, $\C(B) = \Sh(E)\cap B$.
\itrm{(viii)}    Let $x\in B$, where $B$ is a block of $E$. 
Then $\{ y \in E \mid  y \leq x\ \text{and}\ y \leq x'\}\subseteq  B$.
\end{enumerate}
\end{statement}

Hence the class of homogeneous effect algebras includes orthoalgebras, 
effect algebras satisfying the Riesz decomposition property 
and lattice effect algebras.

{
\begin{proposition}\label{modyjem} Let $E$ be a 
homogeneous effect algebra and $v\in E$.
The following conditions are equivalent.
\begin{enumerate}
\item $v\in \Sh(E)$;
\item $y\leq z$ whenever $w, y, z\in E$   such that $v=w\oplus z$,
$y\leq w'$ and $y\leq w$. 
\end{enumerate}
\end{proposition}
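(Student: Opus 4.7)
The plan is to prove the two implications separately; the easy direction $(2) \Rightarrow (1)$ will follow by a trivial specialization, whereas $(1) \Rightarrow (2)$ will rest on a single invocation of the homogeneity property.

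For $(2) \Rightarrow (1)$, I would take an arbitrary $y$ with $y \leq v$ and $y \leq v'$ and feed the trivial decomposition $v = v \oplus 0$ into (2) with $w := v$ and $z := 0$. The hypotheses $y \leq w = v$ and $y \leq w' = v'$ are then immediate, and (2) forces $y \leq 0$, so $v \wedge v' = 0$.

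For $(1) \Rightarrow (2)$, given a decomposition $v = w \oplus z$ with $y \leq w$ and $y \leq w'$, the first move is to rewrite $w' = v' \oplus z$, using $w \oplus w' = 1 = w \oplus (z \oplus v')$ and uniqueness of orthocomplements. The two inequalities on $y$ then translate into $y \leq v' \oplus z = w' \leq y'$, which is exactly the premise for applying homogeneity (Statement~\ref{gejzablok}) with $v_1 := v'$ and $v_2 := z$. This yields a decomposition $y = y_1 \oplus y_2$ with $y_1 \leq v'$ and $y_2 \leq z$. I then trim the $v'$-component by sharpness: $y_1 \leq y \leq w \leq w \oplus z = v$ combined with $y_1 \leq v'$ forces $y_1 \leq v \wedge v' = 0$, so $y_1 = 0$ and $y = y_2 \leq z$.

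The only step requiring any insight is recognizing that the correct substitution in the homogeneity premise is $v_1 \mapsto v'$, $v_2 \mapsto z$, motivated by the identity $w' = v' \oplus z$. Once this is spotted, everything else is essentially automatic; I do not anticipate any further difficulty.
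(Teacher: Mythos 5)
Your proof is correct, and the $(2)\Rightarrow(1)$ direction coincides with the paper's (both specialize to $w=v$, $z=0$). The $(1)\Rightarrow(2)$ direction, however, takes a genuinely different and more elementary route. The paper places the orthogonal system $\{y,\,w\ominus y,\,z,\,1\ominus v\}$ into a block $B$, invokes Statement~\ref{gejzasum}(vii) to see that the sharp element $v$ is central in $B$, and then uses the distributivity of central elements over orthosums (Statement~\ref{gejzapulm}) to identify $z=v\wedge_B w'$, from which $y\leq z$ follows because $y\in B$, $y\leq w\leq v$ and $y\leq w'$. You instead observe that $w'=v'\oplus z$ (by uniqueness of orthosupplements applied to $1=w\oplus z\oplus v'$), note that $y\leq w$ gives $w'\leq y'$, and apply the homogeneity axiom directly to $y\leq v'\oplus z\leq y'$ to split $y=y_1\oplus y_2$ with $y_1\leq v'$, $y_2\leq z$; sharpness of $v$ then kills $y_1$ since $y_1\leq y\leq w\leq v$. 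All steps check out: the decomposition $w'=v'\oplus z$ is legitimate, the homogeneity premise is verified, and $y_1\leq v\wedge v'=0$ uses exactly the definition of $v\in\Sh(E)$. Your argument buys economy and self-containment --- it needs only Definition~\ref{rdp} and the definition of sharpness, bypassing the block/center machinery entirely (in particular the existence of blocks, Statement~\ref{gejzasum}(v), which rests on Zorn's lemma); the paper's version, by contrast, fits the proposition into the block-theoretic framework that the rest of the paper develops and reuses.
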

\begin{proof}
(i) $\implies$ (ii)
Evidently, there is a block, say $B$, such that it contains the following 
orthogonal system $\{y, w\ominus y, z, 1\ominus v\}$. Hence 
$B$ contains also $w$, $w'$ and $v\in \C(B)$. Since 
$1=w\oplus w'$ we obtain by 
Statement \ref{gejzapulm}, (ii) that 
$v=v\wedge_B w\oplus %
v\wedge_B w'=%
w\oplus %
v\wedge_B w'$. Subtracting $w$ we obtain 
$z=v\wedge_B w'$. Hence
 $y\leq w \leq v$ and $y\leq w'$ yields that $y\leq z$.
\newline
(ii) $\implies$  (i)
Let $y\in [0,v]\cap[0,v']$. Put $w=v$ and $z=0$. Immediately, $y\leq 0$.
\end{proof}
}

An important class of effect algebras 
was introduced by Gudder in \cite{gudder1} 
and \cite{gudder2}. Fundamental example is the 
standard Hilbert spaces effect algebra ${\cal E}({\mathcal H})$.

For an element $x$ of an effect algebra $E$ we denote
$$
\begin{array}{r c  l c l}
\widetilde{x}&=\bigvee_{E}\{s\in \Sh(E) \mid s\leq x\}&%
\phantom{xxxxx}&\text{if it exists and belongs to}\ \Sh(E)\phantom{.}\\
\wwidehat{x}&=\bigwedge_{E}\{s\in \Sh(E) \mid s\geq x\}&\phantom{xxxxx}&%
\text{if it exists and belongs to}\ \Sh(E).
\end{array}
$$

{
\begin{definition} \rm (\cite{gudder1},  \cite{gudder2}.) 
An effect algebra $(E, \oplus, 0,
1)$ is called {\em sharply dominating} if for every $x\in E$ there
exists $\wwidehat{x}$.
\end{definition}

Obviously, $\wwidehat{x}$ is the smallest sharp element  such that $x\leq
\wwidehat{x}$. That is $\wwidehat{x}\in \Sh(E)$ and if $y\in \Sh(E)$ satisfies
$x\leq y$ then $\wwidehat{x}\leq y$.

Recall that the following conditions are equivalent in any
effect algebra $E$.
\begin{itemize}
\item $E$ is sharply dominating; 
\item for every $x\in E$ there exists $\widetilde{x}\in \Sh(E)$ such
that $\widetilde{x}\leq x$ and if $u\in \Sh(E)$
satisfies $u\leq x$ then $u\leq \widetilde{x}$;
\item for every $x\in E$ there exist 
a smallest sharp element $\wwidehat{x}$ over $x$ and a greatest sharp 
element $\widetilde{x}$ below $x$.
\end{itemize}
}

As proved in \cite{cattaneo}, 
$\Sh(E)$ is always a sub-effect algebra in 
a sharply dominating  effect algebra $E$.

\begin{statement}\label{gejza}{\rm{}\cite[Proposition 15]{jenca}}
Let $E$ be a sharply dominating  effect algebra. 
Then every $x \in E$ has a
unique decomposition $x = x_S \oplus x_M$, where $x_S\in\Sh(E)$ and $x_M \in \Mea(E)$, 
namely $x=\widetilde{x}\oplus {(x\ominus \widetilde{x})}$.
\end{statement}

{
\begin{lemma}\label{hatrozdilu}
Let $E$ be a sharply dominating effect algebra and let $x\in E$.
\begin{equation*}
\wwidehat{x\ominus\widetilde{x}}=
\wwidehat{\wwidehat{x}\ominus x}=
\wwidehat{x}\ominus \widetilde{x} .
\end{equation*}
\end{lemma}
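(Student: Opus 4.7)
The plan is to establish the three elements are equal by proving the two equalities separately, using that $\Sh(E)$ is a sub-effect algebra of $E$ (as recalled in the paragraph preceding Statement \ref{gejza}). This closure property implies that differences of comparable sharp elements are sharp, that $\widetilde{x}'=1\ominus\widetilde{x}$ is sharp, and that an $\oplus$-sum of sharp elements (when defined) is sharp. In particular, since $\widetilde{x}\le x\le \wwidehat{x}$ with $\widetilde{x},\wwidehat{x}\in\Sh(E)$, the element $\wwidehat{x}\ominus\widetilde{x}$ is already a well-defined sharp element.

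For the first equality $\wwidehat{x\ominus\widetilde{x}}=\wwidehat{x}\ominus\widetilde{x}$, I would argue the two inequalities as follows. The direction $\le$ is immediate: from $x\le \wwidehat{x}$ we get $x\ominus\widetilde{x}\le \wwidehat{x}\ominus\widetilde{x}$, and the right-hand side is sharp, so the defining minimality of $\wwidehat{\,\cdot\,}$ yields $\wwidehat{x\ominus\widetilde{x}}\le\wwidehat{x}\ominus\widetilde{x}$. For $\ge$, observe that $x\ominus\widetilde{x}\le\widetilde{x}'$ and $\widetilde{x}'\in\Sh(E)$, so $\wwidehat{x\ominus\widetilde{x}}\le\widetilde{x}'$; hence $\widetilde{x}\oplus\wwidehat{x\ominus\widetilde{x}}$ is defined and, being a sum of two orthogonal sharp elements, is sharp. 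Since this sharp element dominates $\widetilde{x}\oplus(x\ominus\widetilde{x})=x$ (using Statement \ref{gejza}), minimality of $\wwidehat{x}$ gives $\wwidehat{x}\le\widetilde{x}\oplus\wwidehat{x\ominus\widetilde{x}}$, and subtracting $\widetilde{x}$ yields the desired reverse inequality.

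For the second equality $\wwidehat{\wwidehat{x}\ominus x}=\wwidehat{x}\ominus\widetilde{x}$ I would argue dually. The inequality $\le$ again comes from $\widetilde{x}\le x$ implying $\wwidehat{x}\ominus x\le\wwidehat{x}\ominus\widetilde{x}$ and the right-hand side being sharp. For $\ge$, set $s=\wwidehat{x}\ominus\wwidehat{\wwidehat{x}\ominus x}$. Because $\wwidehat{x}$ is sharp and dominates $\wwidehat{x}\ominus x$, we have $\wwidehat{\wwidehat{x}\ominus x}\le\wwidehat{x}$, so $s$ is defined and sharp. From $\wwidehat{\wwidehat{x}\ominus x}\ge\wwidehat{x}\ominus x$ we obtain $s\le\wwidehat{x}\ominus(\wwidehat{x}\ominus x)=x$. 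Thus $s$ is a sharp element below $x$, and by the maximality of $\widetilde{x}$ among such elements, $s\le\widetilde{x}$; rearranging gives $\wwidehat{x}\ominus\widetilde{x}\le\wwidehat{\wwidehat{x}\ominus x}$.

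The main obstacle is bookkeeping rather than conceptual: one must make sure every difference and sum is actually defined before invoking it, and carefully cite closure of $\Sh(E)$ under $\oplus$, $\ominus$ and $'$. Once that is in place, the two inequalities in each equation are essentially symmetric applications of the defining universal properties of $\widetilde{x}$ and $\wwidehat{x}$.
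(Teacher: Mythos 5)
Your proof is correct and follows essentially the same route as the paper's: both arguments rest on the closure of $\Sh(E)$ under $\oplus$ and $\ominus$ together with the minimality of $\wwidehat{\,\cdot\,}$ and maximality of $\widetilde{\,\cdot\,}$, squeezing $\widetilde{x}\oplus\wwidehat{x\ominus\widetilde{x}}$ against $\wwidehat{x}$ and $\wwidehat{x}\ominus\wwidehat{\wwidehat{x}\ominus x}$ against $\widetilde{x}$. Your explicit justification that $\widetilde{x}\oplus\wwidehat{x\ominus\widetilde{x}}$ is defined and sharp is a welcome bit of bookkeeping that the paper leaves implicit.
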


\begin{proof}
Clearly $x\ominus\widetilde{x} \leq \wwidehat{x}\ominus \widetilde{x} \in
\Sh(E)$ and
$\wwidehat{x}\ominus x \leq \wwidehat{x}\ominus \widetilde{x} \in \Sh(E)$.
Therefore $\wwidehat{x\ominus\widetilde{x}} \leq \wwidehat{x}\ominus
\widetilde{x}$ and
$\wwidehat{\wwidehat{x}\ominus x} \leq \wwidehat{x}\ominus \widetilde{x}$.
Now, by adding $\widetilde{x}$, we obtain
$$x=\widetilde{x}\oplus (x\ominus\widetilde{x}) \leq
\widetilde{x}\oplus\wwidehat{x\ominus\widetilde{x}} \leq \wwidehat{x}$$
which yields
$\widetilde{x}\oplus\wwidehat{x\ominus\widetilde{x}}= \wwidehat{x}$,
and
similarly
$$\widetilde{x}=\wwidehat{x}\ominus (\wwidehat{x}\ominus
\widetilde{x}) \leq \wwidehat{x}\ominus \wwidehat{\wwidehat{x}\ominus x}
\leq \wwidehat{x}\ominus (\wwidehat{x}\ominus x)=x$$
which yields $\widetilde{x}= \wwidehat{x}\ominus
\wwidehat{\wwidehat{x}\ominus x} $.
\end{proof}

\begin{lemma}\label{suplem}
Let $E$ be a sharply dominating effect algebra and let $x\in E$.
\begin{equation*}
\wwidehat{x}\ominus x=x'\ominus (\wwidehat{x})'=x'\ominus\widetilde{(x')}
\end{equation*}
and
\begin{equation*}
x \ominus \widetilde{x}=(\widetilde{x})'\ominus x'=\wwidehat{(x')}\ominus x'
\end{equation*}
\end{lemma}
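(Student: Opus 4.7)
The plan is to reduce everything to the standard effect-algebra identity
\begin{equation*}
a\ominus b = b'\ominus a' \quad \text{whenever } b\le a,
\end{equation*}
which follows directly from $1=b\oplus(a\ominus b)\oplus a'$, plus the duality between $\wwidehat{\cdot}$ and $\widetilde{\cdot}$ under complementation. First I would record the observation that in a sharply dominating effect algebra $\Sh(E)$ is a sub-effect algebra (cited above from \cite{cattaneo}), hence $s\in\Sh(E)$ iff $s'\in\Sh(E)$.

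For the first identity, since $x\le\wwidehat{x}$ the standard identity immediately gives
\begin{equation*}
\wwidehat{x}\ominus x = x'\ominus(\wwidehat{x})',
\end{equation*}
so it only remains to show $(\wwidehat{x})'=\widetilde{x'}$. I would prove this by a symmetric inclusion argument from the universal property of $\wwidehat{\cdot}$ and $\widetilde{\cdot}$. On the one hand, $(\wwidehat{x})'$ is sharp (as the complement of a sharp element) and satisfies $(\wwidehat{x})'\le x'$; hence $(\wwidehat{x})'\le\widetilde{x'}$ by the maximality of $\widetilde{x'}$. On the other hand, $\widetilde{x'}$ is sharp with $\widetilde{x'}\le x'$, so $(\widetilde{x'})'\ge x$ is sharp, and by the minimality of $\wwidehat{x}$ we get $\wwidehat{x}\le(\widetilde{x'})'$, i.e.\ $\widetilde{x'}\le(\wwidehat{x})'$. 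Combining yields $(\wwidehat{x})'=\widetilde{x'}$, completing the first chain.

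For the second identity I would simply substitute $x'$ for $x$ in the first identity already proved. This gives
\begin{equation*}
\wwidehat{x'}\ominus x' = (x')'\ominus(\wwidehat{x'})' = (x')'\ominus\widetilde{(x')'},
\end{equation*}
which after using $(x')'=x$ and $\widetilde{(x')'}=\widetilde{x}$ reads
\begin{equation*}
\wwidehat{x'}\ominus x' = x\ominus\widetilde{x} = (\widetilde{x})'\ominus x',
\end{equation*}
the first equality being our goal and the second following again from the standard identity applied to $\widetilde{x}\le x$.

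I do not expect any real obstacle here; the only non-trivial input is the duality $(\wwidehat{x})'=\widetilde{x'}$, and this is a routine application of the extremality clauses in the definitions of $\wwidehat{\cdot}$ and $\widetilde{\cdot}$, granted that $\Sh(E)$ is closed under $'$. The rest is purely algebraic manipulation in the effect algebra.
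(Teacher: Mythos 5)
Your proof is correct, and since the paper's own proof of this lemma is literally the single word ``Transparent,'' you are simply supplying the routine details the authors omitted: the involution identity $a\ominus b=b'\ominus a'$ for $b\le a$ together with the duality $(\wwidehat{x}\,)'=\widetilde{x'}$, which you correctly derive from the extremality clauses. No gaps.
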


\begin{proof}
Transparent.
\end{proof}
}

\section{Meager, hypermeager and ultrameager elements}

In what follows set (see \cite{jenca,wujunde})
$$\Mea(E)=\{x\in E \mid\ \text{if}\ v\in \Sh(E)\ \text{satisfies}\ v\leq x\ 
\text{then}\ v=0\}.$$

An element $x\in \Mea(E)$ is called {\em meager}. Moreover, $x\in \Mea(E)$ 
iff $\widetilde{x}=0$. Recall that $x\in \Mea(E)$, $y\in E$, $y\leq x$ implies 
$y\in \Mea(E)$ and $x\ominus y\in \Mea(E)$.

We also define

\begin{definition}
\rm
$$\HMea(E)=\{x\in E \mid\ \text{there is}\ y\in E\ \text{such that}\ x\leq y\ 
\text{and}\ x\leq y'\}$$
and
$$\UMea(E)=\{x\in E \mid\ \text{for every}\ y\in \Sh(E)\ \text{such that}\ x\leq
y\ 
\text{it holds}\ x\leq y\ominus x\}.$$
An element $x\in \HMea(E)$ is called {\em hypermeager},
an element $x\in \UMea(E)$ is called {\em ultrameager}.
\end{definition}

\begin{figure}[!t]
\begin{center}
\setlength{\unitlength}{3 mm}
\begin{picture}(36,30)
\multiput(6,9)(24,0){2}{\line(0,1){12}}
\multiput(6,9)(12,18){2}{\line(2,-1){12}}
\multiput(18,3)(-12,18){2}{\line(2,1){12}}
\multiput(6,21)(12,0){2}{\line(1,-1){12}}
\multiput(6,9)(12,0){2}{\line(1,1){12}}
\put(18,3){\line(0,1){24}}
\multiput(6,9)(0,12){2}{\blacken\circle{1}}
\multiput(18,3)(0,24){2}{\blacken\circle{1}}
\multiput(30,9)(0,12){2}{\blacken\circle{1}}
\multiput(18,9)(0,6){3}{\whiten\circle{1}}
\multiput(16.5,1.5)(0,12){2}{\framebox(3,3){ }}
\put(18,9){\oval(6,18)}
\put(3,9){\makebox(0,0){$a$}}
\put(3,21){\makebox(0,0){$a\oplus b$}}
\put(3,9){\makebox(0,0){$a$}}
\put(18,1){\makebox(0,0){$0$}}
\put(22,9){\makebox(0,0){$b$}}
\put(22,15){\makebox(0,0){$2b$}}
\put(22,21){\makebox(0,0){$3b=a\oplus c$}}
\put(18,29){\makebox(0,0){$1=4b=a\oplus b\oplus c$}}
\put(33,9){\makebox(0,0){$c$}}
\put(33,21){\makebox(0,0){$b\oplus c$}}
\put(24,3){\vector(-1,0){3}}
\put(24.5,3){\makebox(0,0)[l]{$\Mea(E)=\HMea(E)$}}
\put(28.5,24){\framebox(3,3){ }}
\put(33,24){\makebox(0,0)[bl]{$\UMea(E)$}}
\end{picture}
\caption{Example \ref{exa_1}}\label{fig_1}
\end{center}
\end{figure}

\begin{example}\label{exa_1}
{\rm{}In the  non-homogeneous non-sharply dominating effect algebra
depictured in Figure \ref{fig_1},
$\Mea(E)=\HMea(E)\ne \UMea(E)$.
Sharp elements are denoted in black. One can easily check that 
$E$ is a sub-effect algebra of the MV-effect algebra $[0, 1]\times [0, 1]$ 
such that $a\mapsto (\frac{3}{4},0), b\mapsto (\frac{1}{4}, 
\frac{1}{4}), c\mapsto (0, \frac{3}{4})$. Moreover, since 
$a\oplus c\not\in \Sh(E)$ we obtain that $\Sh(E)$ is not a 
sub-effect algebra of $E$.}
\end{example}

\begin{example}\label{exa_2}{\rm{}In the  non-homogeneous non-sharply dominating effect algebra
depictured in Figure \ref{fig_2},
$\Mea(E)\ne \HMea(E)\ne \UMea(E)$.

\begin{figure}[!t]
\begin{center}
\setlength{\unitlength}{3 mm}
\begin{picture}(36,42)
\multiput(6,9)(24,0){2}{\line(0,1){24}}
\multiput(6,9)(12,30){2}{\line(2,-1){12}}
\multiput(18,3)(-12,30){2}{\line(2,1){12}}
\multiput(6,33)(12,0){2}{\line(1,-2){12}}
\multiput(6,9)(12,0){2}{\line(1,2){12}}
\put(18,3){\line(0,1){36}}
\multiput(6,9)(0,24){2}{\blacken\circle{1}}
\multiput(18,3)(0,36){2}{\blacken\circle{1}}
\multiput(30,9)(0,24){2}{\blacken\circle{1}}
\multiput(18,9)(0,6){5}{\whiten\circle{1}}
\multiput(16.5,1.5)(0,12){2}{\framebox(3,3){ }}
\put(16.5,19.5){\framebox(3,3){ }}
\put(18,12){\oval(4.5,23)}
\put(18,15){\oval(6,30)}
\put(3,9){\makebox(0,0){$a$}}
\put(3,33){\makebox(0,0){$a\oplus b$}}
\put(3,9){\makebox(0,0){$a$}}
\put(18,1){\makebox(0,0){$0$}}
\put(22,9){\makebox(0,0){$b$}}
\put(22,15){\makebox(0,0){$2b$}}
\put(22,21){\makebox(0,0){$3b$}}
\put(22,27){\makebox(0,0){$4b$}}
\put(22,33){\makebox(0,0){$5b=a\oplus c$}}
\put(18,41){\makebox(0,0){$1=6b=a\oplus b\oplus c$}}
\put(33,9){\makebox(0,0){$c$}}
\put(33,33){\makebox(0,0){$b\oplus c$}}
\put(24,1.5){\vector(-1,0){3}}
\put(24,3){\vector(-1,0){3.75}}
\put(24.5,1.5){\makebox(0,0)[l]{$\Mea(E)$}}
\put(24.5,3){\makebox(0,0)[l]{$\HMea(E)$}}
\put(28.5,36){\framebox(3,3){ }}
\put(33,36){\makebox(0,0)[bl]{$\UMea(E)$}}
\end{picture}
\caption{Example \ref{exa_2}}\label{fig_2}
\end{center}
\end{figure}

Sharp elements are denoted in black. One can easily check that 
$E$ is a sub-effect algebra of the MV-effect algebra $[0, 1]\times [0, 1]$ 
such that $a\mapsto (\frac{5}{6},0), b\mapsto (\frac{1}{6}, 
\frac{1}{6}), c\mapsto (0, \frac{5}{6})$. Moreover, since 
$a\oplus c\not\in \Sh(E)$ we obtain that $\Sh(E)$ is not a 
sub-effect algebra of $E$.}
\end{example}

\begin{lemma}\label{semea}Let $E$ be an effect algebra. Then 
$\HMea(E)\subseteq \Mea(E)$. Moreover, for all $x\in E$, 
$x\in \HMea(E)$ iff $x\oplus x$ exists and, 
for all $y\in \Mea(E)$, $y\not =0$  there is $h\in \HMea(E)$, $h\not =0$ 
such that $h\leq y$.
\end{lemma}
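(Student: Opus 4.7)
The statement splits into three pieces that I would handle in order.

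For the inclusion $\HMea(E)\subseteq\Mea(E)$, take $x\in\HMea(E)$ with a witness $z$ (so $x\leq z$ and $x\leq z'$) and any $v\in\Sh(E)$ with $v\leq x$. From $v\leq z$ one gets $z'\leq v'$, hence $v\leq z'\leq v'$. Thus $v$ is a common lower bound of $\{v,v'\}$, and sharpness forces $v\leq v\wedge v'=0$, so $v=0$ and $x\in\Mea(E)$.

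For the equivalence between $x\in\HMea(E)$ and the existence of $x\oplus x$: the direction ``$\Leftarrow$'' is immediate by taking $z:=x$ as the witness, since $x\oplus x$ being defined is equivalent to $x\leq x'$. For ``$\Rightarrow$'', from a witness $z$ I write $z=x\oplus(z\ominus x)$ using $x\leq z$, and then $x\leq z'$ yields that $x\oplus z=x\oplus x\oplus(z\ominus x)$ is defined, so $x\oplus x$ exists.

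Finally, for a nonzero $y\in\Mea(E)$, the key observation is that $y$ itself cannot be sharp, for otherwise the defining condition of $\Mea(E)$ applied to the sharp element $v:=y$ would force $y=0$. Thus $y\wedge y'=0$ fails: either the meet does not exist, or it exists and is nonzero. In either case there is a nonzero common lower bound $w$ of $\{y,y'\}$---if $0$ were the only common lower bound, the meet would exist and equal $0$. Then $y$ serves as the witness in the definition of $\HMea(E)$ for $w$, so $h:=w\in\HMea(E)$ is a nonzero hypermeager element below $y$. The whole argument is a structured unpacking of the definitions; the only place where a little care is needed is this last part, where one must recognise that ``$y$ not sharp'' is precisely the condition that manufactures the nonzero element witnessing hypermeagerness, and handle the ``meet nonexistent versus nonzero'' dichotomy uniformly.
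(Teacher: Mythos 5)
Your proof is correct and follows essentially the same route as the paper's: the same witness-chasing for $\HMea(E)\subseteq\Mea(E)$, the witness $z:=x$ for the converse of the $x\oplus x$ equivalence, and the observation that a nonzero meager $y$ cannot be sharp, which yields a nonzero common lower bound of $y$ and $y'$ serving as the hypermeager element. Your extra care in spelling out the ``meet nonexistent versus meet nonzero'' dichotomy is a point the paper leaves implicit, but the argument is the same.
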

\begin{proof} Let $x\in \HMea(E)$. Then {there is}\ $y\in E$\ {such that}\ 
$x\leq y$\ {and}\ $x\leq y'$. Therefore also $x\leq y\leq x'$, i.e. $x\oplus x$ exists. 
Let $v\in \Sh(E)$, $v\leq x\leq y$. Then 
$v\leq x\leq y'\leq v'$. Hence $v=v\wedge v'=0$, i.e., $x\in \Mea(E)$. 

Now, let $x\in E$ such that $x\oplus x$ exists. Then $x\leq x'$ and evidently $x\leq x$. 
Hence $x\in \HMea(E)$.

Assume that $y\in \Mea(E)$, $y\not =0$. Since $y$  is meager there is a non-zero element 
$h$ such that $h\leq y$ and $h\leq y'$ (otherwise we would have $y\in \Sh(E)$, 
a contradiction). This invokes that $h$ is hypermeager.
\end{proof}

\begin{lemma}\label{usubhmea}
Every ultrameager element is hypermeager.
\end{lemma}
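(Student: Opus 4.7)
The plan is to invoke the definition of ultrameager for the distinguished sharp element $y=1$. Since $1\wedge 1'=1\wedge 0=0$, we have $1\in\Sh(E)$, and trivially $x\leq 1$ for any $x\in E$. Hence if $x\in\UMea(E)$, the defining implication forces
\begin{equation*}
x\leq 1\ominus x = x'.
\end{equation*}

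This inequality $x\leq x'$ is exactly the assertion that $x\oplus x$ exists in $E$. By Lemma \ref{semea}, existence of $x\oplus x$ is equivalent to $x\in\HMea(E)$ (alternatively, witness hypermeagerness directly by taking $y:=x$ in the definition of $\HMea(E)$: we then have $x\leq x$ and $x\leq x'$). This completes the argument.

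I do not expect any genuine obstacle here: the proof is a one-line specialization of the ultrameager condition to $y=1$, combined with the characterization of hypermeagerness via self-orthogonality already established in Lemma \ref{semea}. The only point worth noting for the reader is that ultrameagerness quantifies over all sharp upper bounds, so the strongest content of the definition is obtained by picking the smallest available bound in the sharp algebra, which in this case happens to be witnessed most conveniently by the maximal element $1$.
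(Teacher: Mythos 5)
Your proof is correct and is essentially identical to the paper's own argument: both specialize the ultrameager condition to the sharp element $1$ to get $x\leq 1\ominus x=x'$ and then invoke Lemma~\ref{semea}.
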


\begin{proof}
Let $x$ be an ultrameager element of an effect algebra $E$.
Because $1\in \Sh(E)$, $x\leq 1\ominus x=x^\prime$, and by Lemma \ref{semea}
$x$ is hypermeager.
\end{proof}

\begin{lemma}\label{umsdea}
Let $E$ be a sharply dominating effect algebra and let $y\in E$. Then 
$y$ is ultrameager if and only if $y\leq \wwidehat{y}\ominus y$.
\end{lemma}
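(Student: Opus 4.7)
The plan is to prove each implication directly from the definitions, leveraging the fact that $\wwidehat{y}$ is (by its very construction) the smallest sharp element above $y$.

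For the forward direction, I would observe that in a sharply dominating effect algebra, $\wwidehat{y}$ exists, belongs to $\Sh(E)$, and satisfies $y\leq\wwidehat{y}$. So if $y$ is ultrameager, then specialising the defining condition of ultrameagerness to the particular sharp majorant $\wwidehat{y}$ immediately yields $y\leq\wwidehat{y}\ominus y$.

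For the backward direction, assume $y\leq\wwidehat{y}\ominus y$ and let $v\in\Sh(E)$ with $y\leq v$. By minimality of $\wwidehat{y}$ among sharp majorants of $y$, we have $\wwidehat{y}\leq v$. Since $y\leq\wwidehat{y}\leq v$, the standard identity for differences in effect algebras gives
\begin{equation*}
v\ominus y=(v\ominus\wwidehat{y})\oplus(\wwidehat{y}\ominus y).
\end{equation*}
Combining this with the hypothesis yields
\begin{equation*}
y\leq\wwidehat{y}\ominus y\leq(v\ominus\wwidehat{y})\oplus(\wwidehat{y}\ominus y)=v\ominus y,
\end{equation*}
so $y\leq v\ominus y$ for every sharp $v\geq y$, which is exactly ultrameagerness.

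The argument is essentially routine; the only slightly delicate point is invoking the difference decomposition $v\ominus y=(v\ominus\wwidehat{y})\oplus(\wwidehat{y}\ominus y)$, which is a basic consequence of the effect-algebra axioms once one has the chain $y\leq\wwidehat{y}\leq v$. No deep use of homogeneity or of any of the earlier structural results is required here beyond the existence of $\wwidehat{y}$ guaranteed by sharp dominance.
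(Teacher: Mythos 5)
Your proof is correct and follows essentially the same route as the paper, whose own argument is just the one-line observation that every sharp $s\geq y$ satisfies $\wwidehat{y}\leq s$ with $\wwidehat{y}\in\Sh(E)$. You merely spell out the monotonicity step $\wwidehat{y}\ominus y\leq v\ominus y$ via the decomposition $v\ominus y=(v\ominus\wwidehat{y})\oplus(\wwidehat{y}\ominus y)$, which the paper leaves implicit.
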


\begin{proof}
For every $s\in \Sh(E)$ for which $y\leq s$, it holds
$\wwidehat{y}\leq s$ and $\wwidehat{y}\in \Sh(E)$.
\end{proof}

\begin{lemma}\label{ujehmea}
In every homogeneous effect algebra $E$, $\UMea(E)=\HMea(E)$.
\end{lemma}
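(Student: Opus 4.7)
The plan is to combine Lemma \ref{usubhmea} (which already gives $\UMea(E)\subseteq \HMea(E)$) with the reverse inclusion, so only $\HMea(E)\subseteq \UMea(E)$ needs work. The key tool I will invoke is Proposition \ref{modyjem}, which is the homogeneous-effect-algebra characterization of sharpness in terms of decompositions; this is precisely why homogeneity is needed.

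First I would fix an arbitrary $x\in\HMea(E)$. By Lemma \ref{semea}, the element $x\oplus x$ exists, equivalently $x\leq x'$. Then I would pick an arbitrary sharp element $s\in\Sh(E)$ with $x\leq s$ and aim to show $x\leq s\ominus x$, which, by the definition of $\UMea(E)$, is exactly what is required.

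The central step is to apply Proposition \ref{modyjem} to the decomposition
\begin{equation*}
s=x\oplus(s\ominus x),
\end{equation*}
in the role of $v=w\oplus z$ with $v=s$, $w=x$, $z=s\ominus x$, and with $y:=x$. The two hypotheses $y\leq w$ and $y\leq w'$ read $x\leq x$ (trivial) and $x\leq x'$ (from the previous paragraph). Since $s\in\Sh(E)$, the proposition yields $y\leq z$, i.e. $x\leq s\ominus x$, as desired.

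I do not expect a genuine obstacle: the verification is a direct specialization of Proposition \ref{modyjem}. The only delicate point to flag is that the inclusion $\HMea(E)\subseteq\UMea(E)$ truly uses homogeneity through Proposition \ref{modyjem}, consistent with Examples \ref{exa_1} and \ref{exa_2}, which exhibit non-homogeneous effect algebras where $\HMea(E)\neq\UMea(E)$.
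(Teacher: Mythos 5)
Your proof is correct. The only inclusion needing work is $\HMea(E)\subseteq\UMea(E)$, and your specialization of Proposition \ref{modyjem} to $v=s$, $w=y=x$, $z=s\ominus x$ is a legitimate instance: $s=x\oplus(s\ominus x)$ holds because $x\leq s$, the hypothesis $x\leq x'$ is exactly the hypermeagerness of $x$ via Lemma \ref{semea}, and condition (ii) of that proposition then delivers $x\leq s\ominus x$, which is the defining property of $\UMea(E)$. The paper argues differently on the surface: it places $x$, $s$, $x\oplus x$ and $s\ominus x$ into a common block $B$ (Statement \ref{gejzasum}\,(v)), observes that $s\in\Sh(E)\cap B=\C(B)$ is central, hence principal in $B$, so $x\oplus x\leq s$ and therefore $x\leq s\ominus x$. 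The two arguments rest on the same underlying mechanism --- sharp elements are central in any block containing them --- but yours outsources that mechanism to Proposition \ref{modyjem}, which the paper proves earlier and then does not actually use here, whereas the paper re-runs the block construction from scratch. Your route is slightly more economical given what is already available in the text; the paper's is self-contained modulo Statement \ref{gejzasum} and makes the role of principality of central elements explicit. Either way the dependence on homogeneity is genuine, as you correctly flag with reference to Examples \ref{exa_1} and \ref{exa_2}.
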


\begin{proof}
Let $E$ be a homogeneous effect algebra. By Lemma \ref{usubhmea},
$\UMea(E)\subseteq\HMea(E)$. Let conversely $x\in \HMea(E)$ and
$y\in \Sh(E)$ such that $x\leq y$.
There exists a block $B$ for which $x,y,x\oplus x,y\ominus x\in B$.
By Statement \ref{gejzasum} (vii), $y$ is central in $B$.
Therefore $x\oplus x\leq y$ and consequently $x\leq y\ominus x$.
\end{proof}

\begin{lemma}
In every sharply dominating homogeneous effect algebra $E$,
\begin{multline*}
\UMea(E)=\bigcup_{y\in \UMea(E)}{\downarrow}y\cap
{\downarrow}(\wwidehat{y}\ominus y)
=
\bigcup_{y\in \HMea(E)}{\downarrow}y\cap
{\downarrow}(\wwidehat{y}\ominus y)
=\\
\bigcup_{y\in \Mea(E)}{\downarrow}y\cap
{\downarrow}(\wwidehat{y}\ominus y)=
\bigcup_{y\in E}{\downarrow}y\cap
{\downarrow}(\wwidehat{y}\ominus y)=
\HMea(E).
\end{multline*}
\end{lemma}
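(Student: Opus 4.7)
The plan is to establish the five equalities as a cycle of six inclusions, so that all the listed sets collapse to one. Write $F(y):={\downarrow}y\cap {\downarrow}(\wwidehat{y}\ominus y)$ for the common integrand. Since $\UMea(E)\subseteq \HMea(E)\subseteq \Mea(E)\subseteq E$ by Lemmas \ref{usubhmea} and \ref{semea}, the three equalities among the four $\bigcup$-sets reduce to the free chain $\bigcup_{y\in \UMea(E)}F(y)\subseteq \bigcup_{y\in \HMea(E)}F(y)\subseteq \bigcup_{y\in \Mea(E)}F(y)\subseteq \bigcup_{y\in E}F(y)$. What remains is to verify three outer links, namely $\UMea(E)\subseteq \bigcup_{y\in \UMea(E)}F(y)$, $\bigcup_{y\in E}F(y)\subseteq \HMea(E)$, and $\HMea(E)\subseteq \UMea(E)$, which closes the cycle.

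The first link is immediate from Lemma \ref{umsdea}: every $z\in \UMea(E)$ satisfies $z\leq \wwidehat{z}\ominus z$ and trivially $z\leq z$, so $z\in F(z)$. The last link $\HMea(E)\subseteq \UMea(E)$ is exactly Lemma \ref{ujehmea}, and it is the only place where homogeneity enters the argument; sharp dominance is needed throughout in order for $\wwidehat{y}$ to exist.

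The main content sits in the middle link. Given $x\in F(y)$ for some $y\in E$, I would set $a:=\wwidehat{y}\ominus y$ and use $x\leq y$ together with $x\leq a$ to split $y=x\oplus (y\ominus x)$ and $a=x\oplus (a\ominus x)$. Summing and reassociating yields $\wwidehat{y}=y\oplus a=x\oplus x\oplus (y\ominus x)\oplus (a\ominus x)$, so in particular $x\oplus x$ exists; Lemma \ref{semea} then gives $x\in \HMea(E)$. This short $\oplus$-manipulation is the only real calculation in the argument; I expect no genuine obstacle, as the proof is essentially bookkeeping built on the preceding lemmas, with associativity of $\oplus$ under the roof $\wwidehat{y}$ doing all the work.
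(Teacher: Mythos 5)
Your proof is correct and follows essentially the same route as the paper's: the same cycle of inclusions, opened by Lemma \ref{umsdea} (giving $y\le\wwidehat{y}\ominus y$ for ultrameager $y$) and closed by Lemma \ref{ujehmea} ($\HMea(E)=\UMea(E)$), with the middle links coming from $\UMea(E)\subseteq\HMea(E)\subseteq\Mea(E)\subseteq E$. The only difference is that you explicitly verify the inclusion $\bigcup_{y\in E}{\downarrow}y\cap{\downarrow}(\wwidehat{y}\ominus y)\subseteq\HMea(E)$, which the paper leaves implicit, using the $x\oplus x$ criterion of Lemma \ref{semea}; this is fine, though one can see it even more directly from $x\le y$ and $x\le\wwidehat{y}\ominus y\le y'$.
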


\begin{proof}
By Lemma \ref{ujehmea}, $\UMea(E)=\HMea(E)$.
Lemma \ref{umsdea} yields
$$\UMea(E) \subseteq 
\bigcup_{y\in \UMea(E)}{\downarrow}y\subseteq
\bigcup_{y\in \UMea(E)}{\downarrow}y\cap
{\downarrow}(\wwidehat{y}\ominus y),$$
which implies
\begin{multline*}
\HMea(E)=\UMea(E)\subseteq\bigcup_{y\in \UMea(E)}{\downarrow}y\cap
{\downarrow}(\wwidehat{y}\ominus y)
\subseteq
\bigcup_{y\in \HMea(E)}{\downarrow}y\cap
{\downarrow}(\wwidehat{y}\ominus y)
\subseteq\\
\bigcup_{y\in \Mea(E)}{\downarrow}y\cap
{\downarrow}(\wwidehat{y}\ominus y)
\subseteq
\bigcup_{y\in E}{\downarrow}y\cap
{\downarrow}(\wwidehat{y}\ominus y)\subseteq \HMea(E).
\end{multline*}
\end{proof}

{
\begin{lemma}\label{pomocne}
In any 
homogeneous effect algebra $E$,
$${y}\wedge_B {z}=0 \iff
{y}\wedge {z}=0$$

holds for any block $B$ and ${y},{z}\in \C(B)$.
\end{lemma}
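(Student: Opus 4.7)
The plan is to handle the two implications separately, with one nearly immediate and the other using that the center of a block is a Boolean algebra and lives inside $\Sh(E)$.

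For the direction $y\wedge z=0\Rightarrow y\wedge_B z=0$, I would simply observe that $y\wedge_B z$ exists in $B$ (central elements admit meets) and is a common lower bound of $y,z$ in $B$, hence in $E$. Since $y\wedge z=0$ in $E$ means every common lower bound in $E$ equals $0$, we immediately get $y\wedge_B z\le 0$, i.e.\ $y\wedge_B z=0$.

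For the converse $y\wedge_B z=0\Rightarrow y\wedge z=0$, my strategy would be to invoke two results recalled earlier: the Statement from \cite{GrFoPu} guaranteeing that $\C(B)$ is a Boolean algebra, and Statement~\ref{gejzasum}(vii), which identifies $\C(B)=\Sh(E)\cap B$. Since $y,z\in\C(B)$, the Boolean meet $y\wedge_{\C(B)} z$ exists and is a common lower bound of $y,z$ in $B$; by monotonicity it therefore satisfies $y\wedge_{\C(B)} z\le y\wedge_B z=0$, so $y\wedge_{\C(B)} z=0$. In the Boolean algebra $\C(B)$ this forces $y\le z'$, where $z'$ denotes the orthocomplement in $\C(B)$. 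Because $\C(B)\subseteq B\subseteq E$ are sub-effect algebras, the orthocomplement of $z$ is the same element $z'$ whether taken in $\C(B)$, in $B$, or in $E$, and so $y\le z'$ holds in $E$.

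To complete the argument I would take an arbitrary $x\in E$ with $x\le y$ and $x\le z$. Then $x\le y\le z'$ together with $x\le z$ shows $x$ is a common lower bound of $z$ and $z'$ in $E$. Since $z\in\C(B)\subseteq\Sh(E)$, we have $z\wedge z'=0$ in $E$, whence $x=0$. Thus $0$ is the only common lower bound of $y,z$ in $E$, giving $y\wedge z=0$. The only subtlety I foresee is the passage from the meet in $B$ to the Boolean meet in $\C(B)$, but this is handled purely by monotonicity---we need only one inequality, not equality of the two meets---so no delicate computation is required.
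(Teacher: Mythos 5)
Your proof is correct and follows essentially the same route as the paper's: for the nontrivial direction, reduce $y\wedge_B z=0$ to $y\leq z'$ via the Boolean structure of $\C(B)$, and then kill any common lower bound in $E$ using that $z\in\C(B)=\Sh(E)\cap B$ is sharp, so $[0,z]\cap[0,z']=\{0\}$. The paper states the step $y\wedge_B z=0\implies y\leq z'$ without comment and dismisses the other direction as trivial; you merely spell out the same details.
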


\begin{proof}
$\implies$
Since ${y}\wedge_B {z}=0$, it holds
${y}\leq {z}'$. Therefore
$w \in [0,{y}] \cap [0,{z}]$ implies
$w \in [0,{z}'] \cap [0,{z}]=\{0\}$.
\newline
$\Longleftarrow$
Trivial.
\end{proof}

\begin{lemma}\label{sober}
The following conditions are equivalent in any sharply dominating
homogeneous effect algebra $E$.
\begin{enumerate}
\item for any block $B$ and $v,y,z \in B$, it holds $\wwidehat{v}\in B$
and $y\wedge_B z=0 \implies \wwidehat{y}\wedge_B
\wwidehat{z}=0$;
\item for any block $B$ and $v,y,z \in B$, it holds $\wwidehat{v}\in B$,
and $y\wedge_B z=0 \implies \wwidehat{y}\wedge_B
\wwidehat{z}=0$
if furthermore $y,z \in B \cap \Mea(E)$;
\item for any block $B$ and $v,y,z \in B$, it holds $\wwidehat{v}\in B$
and $y\wedge_B z=0 \implies \wwidehat{y}\wedge
\wwidehat{z}=0$;
\item for any block $B$ and $v,y,z \in B$, it holds $\wwidehat{v}\in B$,
and $y\wedge_B z=0 \implies \wwidehat{y}\wedge
\wwidehat{z}=0$
if furthermore $y,z \in B \cap \Mea(E)$.
\end{enumerate}
\end{lemma}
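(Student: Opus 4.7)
The plan is to reduce the four-way equivalence to a single non-trivial implication by exploiting two structural facts. First, the common hypothesis ``$\wwidehat{v}\in B$ for every $v\in B$'' automatically places $\widetilde{v}$ in $B$ as well: Lemma \ref{suplem} gives $(\widetilde{v})'=\wwidehat{(v')}$, and both $v'$ and $\wwidehat{(v')}$ belong to the sub-effect algebra $B$. Hence, for $y,z\in B$, each of $\widetilde{y},\widetilde{z},\wwidehat{y},\wwidehat{z}$ lies in $\Sh(E)\cap B=\C(B)$ by Statement \ref{gejzasum}(vii). Second, Lemma \ref{pomocne} applied to $\wwidehat{y},\wwidehat{z}\in\C(B)$ gives the equivalence $\wwidehat{y}\wedge_B\wwidehat{z}=0\iff\wwidehat{y}\wedge\wwidehat{z}=0$, so (1)$\iff$(3) and (2)$\iff$(4). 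Since (1)$\Rightarrow$(2) and (3)$\Rightarrow$(4) are trivial quantifier restrictions, the whole statement reduces to proving (2)$\Rightarrow$(1); (4)$\Rightarrow$(3) then follows from the equivalences just noted.

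For (2)$\Rightarrow$(1), given $y,z\in B$ with $y\wedge_B z=0$, I would use Statement \ref{gejza} to decompose $y=\widetilde{y}\oplus y_M$ and $z=\widetilde{z}\oplus z_M$ with $y_M,z_M\in\Mea(E)\cap B$. Since $\widetilde{y},y_M\leq y$ and $\widetilde{z},z_M\leq z$, any common lower bound in $B$ of the pair $(\widetilde{y},\widetilde{z})$ or of the pair $(y_M,z_M)$ is also a common lower bound of $y$ and $z$, hence equals $0$. Thus $\widetilde{y}\wedge_B\widetilde{z}=0$ and $y_M\wedge_B z_M=0$; the latter, combined with hypothesis (2), yields $\wwidehat{y_M}\wedge_B\wwidehat{z_M}=0$.

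The crucial step is to kill the two cross meets $\widetilde{y}\wedge_B\wwidehat{z_M}$ and $\wwidehat{y_M}\wedge_B\widetilde{z}$. For the first, the same lower-bound argument gives $\widetilde{y}\wedge_B z_M=0$; because $\widetilde{y}\in\C(B)$ is principal, Statement \ref{gejzapulm}(i) applied to the decomposition of $z_M$ along $\widetilde{y}\oplus\widetilde{y}'=1$ forces $z_M\leq\widetilde{y}'$; since $\widetilde{y}'$ is sharp, this propagates up to $\wwidehat{z_M}\leq\widetilde{y}'$, whence $\widetilde{y}\wedge_B\wwidehat{z_M}\leq\widetilde{y}\wedge_B\widetilde{y}'=0$. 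The second cross meet is symmetric.

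Finally, Lemma \ref{hatrozdilu} identifies $\wwidehat{y}=\widetilde{y}\oplus\wwidehat{y_M}$ and $\wwidehat{z}=\widetilde{z}\oplus\wwidehat{z_M}$, so $\wwidehat{y}\wedge_B\wwidehat{z}$ is the meet of two orthogonal joins of central elements whose four pairwise meets have just been shown to vanish; Boolean distributivity in $\C(B)$ then delivers $\wwidehat{y}\wedge_B\wwidehat{z}=0$, which is (1). The main delicacy I anticipate is the bookkeeping that the meet in $B$ of two central elements coincides with their meet in the Boolean sub-effect algebra $\C(B)$, so that Boolean distributivity may legitimately be transferred between the two structures; this follows from the principality of central elements, but it is the only place where the plan leans on something slightly beyond the explicitly cited results.
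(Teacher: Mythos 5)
Your proof is correct, and its skeleton coincides with the paper's: both reduce the whole statement to (ii)$\implies$(i) via Lemma \ref{pomocne} and the trivial quantifier restrictions, decompose $y$ and $z$ into sharp and meager parts, feed the meager parts into hypothesis (ii), and finish by distributing the central elements over the orthosums $\wwidehat{y}=\widetilde{y}\oplus(\wwidehat{y}\ominus\widetilde{y})$, $\wwidehat{z}=\widetilde{z}\oplus(\wwidehat{z}\ominus\widetilde{z})$ supplied by Lemma \ref{hatrozdilu}. Where you genuinely diverge is the cross-meet step. The paper argues by contradiction: a nonzero $w\in[0,\widetilde{y}]\cap[0,\wwidehat{z}]\cap B$ is replaced by its sharp cover $\wwidehat{w}$, which is split along $\wwidehat{z}=z\oplus(\wwidehat{z}\ominus z)$ using the Riesz decomposition property of the block; the piece under $z$ vanishes because $y\wedge_B z=0$, and what remains is a sharp element below the meager element $\wwidehat{z}\ominus z$, hence zero. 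You instead exploit centrality of $\widetilde{y}$ in $B$: from $z_M\wedge_B\widetilde{y}=0$ you get $z_M=z_M\wedge_B\widetilde{y}'\leq\widetilde{y}'$, and since $\widetilde{y}'$ is sharp this lifts to $\wwidehat{z_M}\leq\widetilde{y}'$. This is shorter, avoids both the contradiction and the explicit appeal to RDP, and in fact applied to $z$ itself it gives $\wwidehat{z}\leq\widetilde{y}'$, i.e. $\widetilde{y}\wedge_B\wwidehat{z}=0$ in one shot, which is exactly the meet the paper computes. Two minor repairs: the decomposition of $z_M$ along $\widetilde{y}\oplus\widetilde{y}'=1$ is Statement \ref{gejzapulm}(ii) (or the quoted theorem on centers), not \ref{gejzapulm}(i); and the delicacy you flag about meets in $B$ versus meets in $\C(B)$ disappears if you simply apply Statement \ref{gejzapulm}(i) twice inside $B$, as the paper does, rather than invoking Boolean distributivity of $\C(B)$.
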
 

\begin{proof}
Clearly (i)$\iff$(iii) and (ii)$\iff$(iv) in virtue of Lemma \ref{pomocne}.
Furthermore, (i)$\implies$(ii) and (iii)$\implies$(iv).
\newline
(ii)$\implies$(i)
Let $B$ be a block in $E$ and $y,z \in B$. By assumption,
$\wwidehat{y},\wwidehat{z}\in B$. Therefore
$\wwidehat{y}\ominus y,\wwidehat{z}\ominus z\in B\cap \Mea(E)$.
By assumption, from Lemma \ref{hatrozdilu} and  the fact that 
 $({{y}\ominus \widetilde{y}})\wedge_B ({{z}\ominus \widetilde{z}})=0$ we get
$$(\wwidehat{y}\ominus \widetilde{y})\wedge_B
(\wwidehat{z}\ominus \widetilde{z})=
\wwidehat{{y}\ominus \widetilde{y}}\wedge_B
\wwidehat{{z}\ominus \widetilde{z}}=0.$$
Further, $0\ne w \in [0,\widetilde{y}] \cap [0,\wwidehat{z}]\cap B$ implies
$0\ne \wwidehat{w} \in [0,\widetilde{y}] \cap [0,\wwidehat{z}]$.
Clearly $\wwidehat{w} \wedge_B (z
\oplus (\wwidehat{z}\ominus z)) = \wwidehat{w} \ne 0$.
There exist $w_1 \leq z, w_2 \leq \wwidehat{z}\ominus z$ for which
$w_1\oplus w_2 = \wwidehat{w}\leq y$.  
Because $\wwidehat{w} \wedge_B z=0$, $w_1=0$ and
$\wwidehat{w}=w_2\leq \wwidehat{z}\ominus z\in \Mea(E)$.
Therefore $\wwidehat{w}=0$, a contradiction.

This yields $\wwidehat{y}\wedge_B\wwidehat{z}= %
(\wwidehat{y}\ominus \widetilde{y})\wedge_B\wwidehat{z}%
\oplus (\widetilde{y} \wedge_B \wwidehat{z})= %
(\wwidehat{y}\ominus \widetilde{y}) \wedge_B \wwidehat{z}=%
(\wwidehat{{y}\ominus \widetilde{y}})\wedge_B \wwidehat{z}$.

Applying the above considerations once more we obtain that 
$\wwidehat{y}\wedge_B\wwidehat{z}= %
(\wwidehat{{y}\ominus \widetilde{y}})\wedge_B \wwidehat{z}=%
(\wwidehat{{y}\ominus \widetilde{y}})\wedge_B (\wwidehat{z}\ominus \widetilde{z})=0$.
\end{proof}

\begin{definition} \rm
A sharply dominating homogeneous effect algebra is {\em sober} if it
satisfies the equivalent conditions in Lemma \ref{sober}.
\end{definition}

\begin{lemma}\label{soucethat}
Let $E$ be a homogeneous effect algebra, and $y\in E$ and $w\in \Sh(E)$ for which
$y\leq w$ and $ky$ exists. It holds $ky\leq w$.
\end{lemma}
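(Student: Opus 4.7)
The plan is to prove the claim by induction on $k$. For $k=1$ the statement reduces to the hypothesis $y\leq w$. For the inductive step, assume $ky\leq w$; the goal is to show $(k+1)y = ky\oplus y \leq w$, which (since $ky\leq w$ is already available) is equivalent to $y\leq w\ominus ky$.

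To extract $y\leq w\ominus ky$, I would invoke Proposition \ref{modyjem}, which characterizes sharpness of $w$ precisely in the form: whenever $w = u\oplus z$ and an element $a$ satisfies $a\leq u$ and $a\leq u'$, one has $a\leq z$. Apply this to the decomposition $w = ky \oplus (w\ominus ky)$, valid by the inductive hypothesis, taking $a=y$. Two side conditions need to be checked:

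\textbf{(a)} $y\leq ky$: for $k=1$ this is trivial, and for $k\geq 2$ it follows from $ky = (k-1)y\oplus y$.

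\textbf{(b)} $y\leq (ky)'$: since $(k+1)y = ky\oplus y$ is assumed to exist, $y$ is orthogonal to $ky$, which is exactly $y\leq (ky)'$.

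Both conditions hold, so Proposition \ref{modyjem} yields $y\leq w\ominus ky$, giving $(k+1)y\leq w$ and closing the induction. There is no real obstacle here beyond noticing that Proposition \ref{modyjem} is exactly shaped for this inductive step; the result is a clean application of the sharp-element characterization in a homogeneous effect algebra.
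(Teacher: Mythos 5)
Your proof is correct. The paper also proceeds by induction on $k$, but the mechanism of the inductive step is different: the paper first places $y'$, $w$, $y,2y,\dots,ky$ into a single block $B$, notes that $w$ is then central in $B$ (Statement \ref{gejzasum} (vii)), and computes $w\wedge_B ky=(w\wedge_B (k-1)y)\oplus (w\wedge_B y)=(k-1)y\oplus y=ky$ using the distributivity of central elements over orthosums (Statement \ref{gejzapulm}). You instead apply Proposition \ref{modyjem} to the decomposition $w=ky\oplus (w\ominus ky)$, checking $y\leq ky$ and $y\leq (ky)'$, which yields $y\leq w\ominus ky$ and hence $(k+1)y\leq w$; both side conditions are verified correctly, and the passage from $y\leq w\ominus ky$ to $(k+1)y\leq w$ is the standard cancellation argument. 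The two routes are close relatives, since Proposition \ref{modyjem} is itself proved by exactly the block-and-centrality computation the paper uses here; but your version has the advantage that all the compatibility bookkeeping is already encapsulated in Proposition \ref{modyjem}, so you need not justify that the listed elements are jointly compatible and lie in a common block (a point the paper asserts without detail). The paper's version, in exchange, delivers the slightly stronger intermediate identity $w\wedge_B ky=ky$.
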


\begin{proof}
The elements $y'$, $w$, $y,2y,\dots,ky$ belong to one block $B$.
For $k=1$ the statement holds. Suppose $2\leq k$ and the statement holds for
$k-1$. 
By Statement \ref{gejzapulm}, $w\wedge_B ky=%
(w\wedge_B (k-1)y)\oplus y=(k-1)y\oplus y=ky$.
Therefore $ky\leq w$.
\end{proof}

\begin{lemma}\label{hatato}
Let $E$  be an Archimedean homogeneous effect algebra.
For any $a\in E\smallsetminus\{0\}$ for which $a\wedge (n_a a)'=0$,
it holds $\wwidehat{a}$ exists and $\wwidehat{a}=n_a a$.
\end{lemma}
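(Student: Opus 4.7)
The plan is to establish two things: that $n_a a$ is itself sharp, and that every sharp element above $a$ dominates $n_a a$. Together these imply that $\wwidehat{a}$ exists and equals $n_a a$.

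For sharpness of $n_a a$, I would take any $w \in E$ with $w \leq n_a a$ and $w \leq (n_a a)'$ and aim to show $w = 0$. The second inequality means $w \oplus n_a a$ is defined, hence $n_a a \leq w'$, so that $w \leq n_a a = a \oplus a \oplus \cdots \oplus a \leq w'$ ($n_a$ summands). This is exactly the setup for Statement \ref{gejzablok} (with $u = w$ and $v_1 = \cdots = v_{n_a} = a$), which yields a decomposition $w = u_1 \oplus \cdots \oplus u_{n_a}$ with each $u_i \leq a$. Since $u_i \leq w \leq (n_a a)'$ and simultaneously $u_i \leq a$, the hypothesis $a \wedge (n_a a)' = 0$ forces $u_i = 0$ for every $i$, and therefore $w = 0$. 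This is exactly the condition $(n_a a) \wedge (n_a a)' = 0$, i.e.\ $n_a a \in \Sh(E)$.

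For the minimality, I would invoke Lemma \ref{soucethat}: given any $s \in \Sh(E)$ with $a \leq s$, since $n_a a$ exists by the choice of $n_a$, that lemma (applied to $y = a$, $k = n_a$, $w = s$) yields $n_a a \leq s$. Combining with the previous paragraph, $n_a a$ belongs to $\Sh(E)$ and is a lower bound of $\{s \in \Sh(E) \mid s \geq a\}$; since $n_a a$ itself is such a sharp element above $a$, it is in fact the minimum, so $\wwidehat{a}$ exists and $\wwidehat{a} = n_a a$.

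The only nontrivial step is the first one, where the correct reading of $w \leq (n_a a)'$ as $n_a a \leq w'$ is what allows homogeneity (in the strengthened form of Statement \ref{gejzablok}) to apply and split $w$ along the $n_a$ copies of $a$; the hypothesis $a \wedge (n_a a)' = 0$ is then precisely the leverage needed to annihilate each summand. The remainder is a direct application of Lemma \ref{soucethat} and requires no further calculation.
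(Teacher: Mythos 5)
Your proposal is correct and follows essentially the same route as the paper: both show $n_a a$ is sharp by splitting any common lower bound $w$ of $n_a a$ and $(n_a a)'$ via Statement \ref{gejzablok} into pieces $u_i\leq a\wedge (n_a a)'=0$, and both then conclude via Lemma \ref{soucethat} that $n_a a$ is the least sharp element above $a$. No gaps.
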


\begin{proof}
Let $a\in E$ such that $a\wedge (n_a a)'=0$. Clearly $a\nleq (n_a a)'$.
Suppose that there exists an element
$b\in E$, $b\leq n_a a$ and $b \leq (n_a a)'$. 
By Statement \ref{gejzablok} we have that there are $b_1, \dots, b_{n_a}$ 
such that
$b=b_1\oplus \dots \oplus b_{n_a}$ and $b_i\leq a$ for all $1\leq i\leq n_a$. 
Hence $b_i\leq a \wedge (n_a a)'=0$  for all $1\leq i\leq n$, i.e. $b=0$. Therefore 
$n_a a\in \Sh(E)$ and by Lemma \ref{soucethat}, the statement follows.
\end{proof}

Let us recall the following statement.

{\renewcommand{\labelenumi}{{\normalfont  (\roman{enumi})}}
\begin{statement}{\rm\cite[Theorem 2.10]{niepa}}\label{popismeager} Let $E$ be 
an atomic Archime\-dean  lattice
effect algebra and let $x\in \Mea(E)$. Let us denote 
$A_x=\{a \mid \ a\ \text{an atom of}\ E,$ $ a\leq x\}$ and, for any $a\in A_x$, 
we shall put $k^{x}_a=\text{max}\{ k\in {\mathbb N}\mid ka \leq x\}$. Then
\begin{enumerate}
\settowidth{\leftmargin}{(iiiii)}
\settowidth{\labelwidth}{(iii)}
\settowidth{\itemindent}{(ii)}
\item For any $a\in A_x$ we have $k^{x}_a < n_a$.
\item The set $F_x=\{k^{x}_a a \mid \ a\in A_x\}$ is orthogonal and 
$$
x=\bigoplus\{k^{x}_a a \mid a\ \text{an atom of}\ E,\ a\leq x\}= \bigvee F_x.
$$
Moreover, for all $B\subseteq A_x$ and all natural numbers   $l_b< n_b, b\in B$ such that 
$x=\bigoplus \{l_b b \mid b\in B\}$ we have that $B=A_x$ and 
$l_a=k^{x}_a$ for all $a\in A_x$ i.e., $F_x$ is the unique set  
of multiples of atoms from $A_x$ such that its orthogonal sum is $x$. 
\item If  $\widehat{x}$ exists then 
$$
\begin{array}{r c l}
\widehat{x}=\widehat{\widehat{x}\ominus x}&=&\bigoplus\{n_a a \mid a\ \text{an atom of}\ E,\ a\leq x\}\\%
\phantom{\huge I}&=&\bigvee\{n_a a \mid\ a\in A_x\}
\end{array}
$$
and 
$$
\begin{array}{r c l}
\widehat{x}\ominus x&=&\bigoplus\{(n_a-k^{x}_a) a \mid\ a\in A_x\}\\%
\phantom{\huge I}&=&\bigvee\{(n_a-k^{x}_a) a \mid\ a\in A_x\}.
\end{array}
$$
\end{enumerate}
\end{statement}}

\begin{statement}\label{lem:2.1} {\rm \cite[Theorem 2.1]{ZR57}}\/
Let $E$ be a lattice effect algebra. 
Assume $b\in E$,
$A\subseteq E$ are such that $\bigvee A$ exists in $E$ and $b\comp a$
for all $a\in A$. Then
\begin{enumerate}
\item[{\rm(a)}]
$b\comp \bigvee A$.
\item[{\rm(b)}]
$\bigvee\{b\wedge a: a\in A\}$ exists in $E$ and
equals $b\wedge(\bigvee A)$.
\end{enumerate}
\end{statement}

\begin{proposition}
Every atomic Archimedean sharply dominating lattice effect algebra is
sober.
\end{proposition}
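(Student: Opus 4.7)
The plan is to verify condition (iv) of Lemma \ref{sober}. Since $E$ is a lattice effect algebra and every block $B$ is an MV-algebra by Statement \ref{gejzasum}~(iii)--(v), $B$ is a sub-lattice of $E$; hence $\wedge_B=\wedge$ on $B$, and it suffices to show, for each block $B$: (a) $\wwidehat{v}\in B$ for every $v\in B$; and (b) whenever $y,z\in B\cap\Mea(E)$ satisfy $y\wedge z=0$, one has $\wwidehat{y}\wedge\wwidehat{z}=0$.

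First I would establish that for every atom $a$ of $E$, $\wwidehat{a}=n_a a$. Indeed, if $0\ne w\le a\wedge(n_a a)'$, then atomicity forces $w=a$, so $(n_a+1)a$ exists, contradicting Archimedeanness; Lemma \ref{hatato} then applies. Combined with Statement \ref{popismeager}~(iii), this yields $\wwidehat{x}=\bigvee\{n_a a:a\in A_x\}$ for every meager $x$.

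For (a), write $v=\widetilde{v}\oplus v_M$ using Statement \ref{gejza}, so $\wwidehat{v}=\widetilde{v}\oplus\wwidehat{v_M}$ by Lemma \ref{hatrozdilu}; by Statement \ref{popismeager}~(ii), $v_M=\bigoplus\{k^{v_M}_a a:a\in A_{v_M}\}$, and each atom $a$ lies below $v\in B$. The technical crux is to show that every such atom $a$ and each multiple $ka\le n_a a$ is compatible with every element of $B$, so that $B\cup\{a,2a,\dots,n_a a\}$ is internally compatible; maximality of $B$ (Statement \ref{gejzasum}~(iv)) then forces $n_a a\in B$. The same reasoning gives $\widetilde{v}\in B$, whence $\wwidehat{v}\in B$.

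For (b), $y\wedge z=0$ together with atomicity forces $A_y\cap A_z=\emptyset$, since any common atom would be a nonzero lower bound. By (a) each $n_a a$ with $a\in A_y\cup A_z$ lies in $B$, and Statement \ref{lem:2.1} lets us distribute the meet:
\begin{equation*}
\wwidehat{y}\wedge\wwidehat{z}=\bigvee_{a\in A_y,\,b\in A_z}\bigl(n_a a\wedge n_b b\bigr).
\end{equation*}
For distinct atoms $a\ne b$ of $E$ lying in $B$, $a\wedge b=0$, and since $n_a a,n_b b\in\C(B)=\Sh(E)\cap B$ by Statement \ref{gejzasum}~(vii), disjointness in the Boolean algebra $\C(B)$ yields $n_a a\wedge n_b b=0$. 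Hence $\wwidehat{y}\wedge\wwidehat{z}=0$. The main obstacle is the compatibility argument in (a); once the atoms of $E$ below any $v\in B$ are shown to lie in $B$, the computation in (b) reduces to distributing meets in the Boolean algebra of central elements of $B$.
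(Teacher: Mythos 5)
Your overall strategy is the same as the paper's: decompose via Statement~\ref{popismeager}, get the relevant atoms and their multiples into the block $B$, and then distribute meets with Statement~\ref{lem:2.1}. But there are two genuine gaps. First, in part (a) you explicitly label as ``the technical crux'' the claim that each atom $a\in A_{v_M}$ and its multiples are compatible with every element of $B$ --- and then you do not prove it. This is the heart of the whole proposition, not a detail to be deferred. The paper closes it in one line: since $k^{v_M}_a<n_a$, Statement~\ref{popismeager}(iii) gives $a\le\wwidehat{v}\ominus v\le v'$, while also $a\le v_M\le v$; hence $a\in B$ by Statement~\ref{gejzasum}(viii), and then $(n_a-k^{v_M}_a)a\in B$ because $B$ is a sub-effect algebra, with Statement~\ref{lem:2.1} putting the join $\wwidehat{v}\ominus v$, and hence $\wwidehat{v}$, into $B$. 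Without some such argument your proof of (a) is missing precisely the step everything else depends on.

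Second, in part (b) the step ``for distinct atoms $a\ne b$ in $B$, $a\wedge b=0$, and since $n_a a,n_b b\in\C(B)$, disjointness in the Boolean algebra $\C(B)$ yields $n_a a\wedge n_b b=0$'' assumes what must be shown: $a\wedge b=0$ does not formally entail $\wwidehat{a}\wedge\wwidehat{b}=0$ --- that implication is itself an instance of the sobriety condition you are proving, and membership of two elements in a Boolean subalgebra does not make them disjoint. The claim is true and repairable (e.g., $y\wedge_B z=0$ in the MV-algebra $B$ forces $y\le z'$, hence $z\le a'$ for $a\in A_y$; the paper then decomposes $n_a a\wedge z\le n_a a=a\oplus\dots\oplus a$ by Statement~\ref{gejzablok} into summands below $n_a a\wedge z\wedge a=0$, concluding $n_a a\wedge z=0$ directly and iterating to get $\wwidehat{y}\wedge\wwidehat{z}=0$), but as written you have asserted the key disjointness rather than proved it. The remaining ingredients of your proposal ($\wwidehat{a}=n_a a$ for atoms via Lemma~\ref{hatato}, the equivalence of conditions (ii) and (iv) of Lemma~\ref{sober}, and the double distribution of meets) are fine.
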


\begin{proof} Let us check the condition (ii) from Lemma \ref{sober}.
Let  $B$ be a block of $E$ and assume that $v,y,z \in B$. Then 
$v=\widetilde{v}\oplus x$, $x\in \Mea(E)$. If $x=0$ we are finished. 
Assume that $x\ne 0$. We shall use the same notation 
as in Statement \ref{popismeager}. Recall that 
$\widehat{x}\ominus x=\widehat{v}\ominus v\in \Mea(E)$. 
Moreover, let $a\in A_x$. Then $a\leq x \leq v$ and 
$a\leq \widehat{x}\ominus x=\widehat{v}\ominus v\leq v'$. Hence 
$a\in B$ by  Statement \ref{gejzasum}, (viii). This yields that 
$(n_a-k^{x}_a)a\in B$ for all $a\in A_x$. Since 
$\widehat{x}\ominus x=%
\bigoplus\{(n_a-k^{x}_a) a \mid a\ \text{an atom of}\ E,\ a\leq x\}$ 
we have by Statement \ref{lem:2.1} that 
$\widehat{v}\ominus v=\widehat{x}\ominus x\in B$. Hence also 
$\wwidehat{v}=(\widehat{v}\ominus v)\oplus v\in B$.

Assume now that $y\in \Mea(E)$ and $y\wedge z=0$. 
Let us put 
$A_y=\{a \mid \ a\ \text{an}$ $\text{atom of}\ E,$ $ a\leq y\}$. 
Evidently, $a\wedge z=0$, $n_a a\wedge z\in B$ and $z\leq a'$ 
for all $a\in A_y$. Therefore 
by  Statement \ref{gejzablok} 
$n_a a\wedge z\leq n_a a=a \oplus \dots \oplus a$ yields that 
$n_a a\wedge z=b_1\oplus \dots b_n$, $b_i\leq n_a a\wedge z\wedge a=0$  
for all $a\in A_y$. 
Then Statements \ref{popismeager}, (iii) and  \ref{lem:2.1}, (ii) yield that 
$\widehat{y}\wedge z=\bigvee\{n_a a \mid\ a\in A_y\}\wedge z=%
\bigvee\{n_a a \wedge z\mid\ a\in A_y\}=0$.

Assume now that $y, z\in \Mea(E)$ and $y\wedge z=0$. Applying the same considerations as above once more we get that 
$\widehat{y}\wedge \widehat{z}=0$.
\end{proof}

}

{
\section{Meager elements in orthocomplete homogeneous effect algebras}

By definition and preceding results, orthocomplete homogeneous effect
algebras are always homogeneous, Archimedean, sharply dominating and fulfill
the following condition (W+).
}

\begin{definition}{\rm \cite{tkadlec}}
\rm
An effect algebra $E$ {\em fulfills the condition (W+)} if for each orthogonal subset $A\subseteq E$
and each two upper bounds $u,v$ of $A^\oplus$ there exists an upper bound
$w$ of $A^\oplus$ below $u,v$. 

An effect algebra $E$ has the {\em maximality property} 
if $\{ u, v \}$ has a maximal
lower bound $w$ for every $u, v \in E$.
\end{definition}

It is easy to see that an effect algebra $E$ has the maximality
property if and only if $\{ u, v \}$ has a maximal
lower bound $w$, $w\geq t$ for every $u, v, t \in E$ such that 
$t$ is a lower bound of $\{ u, v \}$. As noted 
in \cite{tkadlec} $E$ has the maximality
property if and only if  $\{ u, v \}$ has a minimal
upper bound $w$ for every $u, v \in E$.

\begin{statement}{\rm \cite[Theorem 2.2]{tkadlec}} 
Lattice effect algebras and orthocomplete effect algebras fulfill both the condition (W+) and the maximality property.
\end{statement}

{
\begin{statement} \label{minimax} {\rm\cite[Theorem 3.1]{tkadlec}}
Let $E$ be an Archimedean effect algebra fulfilling the condition (W+),
and let $y,z\in E$.
Every lower bound of $y,z$ is below a maximal one and
every upper bound of $y,z$ is above a minimal one. Then 
$E$ has the maximality property.
\end{statement}

\begin{proposition}\label{hcmea}
Let $E$ be an Archimedean effect algebra fulfilling the condition (W+). 
Then every meager element of $E$ is the
orthosum of a system of hypermeager elements.
\end{proposition}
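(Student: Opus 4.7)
The plan is to use Zorn's lemma to extract a maximal orthogonal system of nonzero hypermeager elements whose finite subsums lie below $x$, and then to apply the condition (W+) to identify its orthosum with $x$.

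First I would consider the collection $\mathcal{F}$ of all orthogonal subsets $H$ of $\HMea(E)\setminus\{0\}$ such that $s \leq x$ for every $s \in H^{\oplus}$, ordered by inclusion. The empty set lies in $\mathcal{F}$, and for any chain $(H_\alpha)$ in $\mathcal{F}$ the union $H = \bigcup_\alpha H_\alpha$ is again in $\mathcal{F}$: any finite $F \subseteq H$ is already contained in some $H_\alpha$ by directedness, so $\bigoplus F$ exists and is bounded above by $x$. Zorn's lemma then supplies a maximal element $H \in \mathcal{F}$.

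Next I would establish that $x$ is the only upper bound of $H^{\oplus}$ lying below $x$. If $u \leq x$ were an upper bound of $H^{\oplus}$ with $u \neq x$, then $y = x \ominus u$ is a nonzero meager element (meagerness is inherited by subelements, since $y \leq x \in \Mea(E)$). Lemma \ref{semea} then produces a nonzero hypermeager element $h \leq y$. For any finite $F \subseteq H$ the sum $\bigoplus F$ lies below $u$, and thus $\bigoplus F \oplus h \leq u \oplus (x \ominus u) = x$; consequently $H \cup \{h\}$ is still orthogonal with all finite subsums $\leq x$, so it belongs to $\mathcal{F}$ and properly contains $H$, contradicting maximality.

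Finally, for any upper bound $v$ of $H^{\oplus}$ in $E$, condition (W+) applied to the two upper bounds $v$ and $x$ of $H^{\oplus}$ yields an upper bound $w$ of $H^{\oplus}$ with $w \leq v$ and $w \leq x$. The previous paragraph forces $w = x$, whence $x \leq v$. Hence $x = \bigvee H^{\oplus}$ exists and equals $\bigoplus H$, which expresses $x$ as the orthosum of a system of hypermeager elements. The main obstacle is exactly this last step: (W+) provides only downward-directedness of the set of upper bounds of $H^{\oplus}$ and no form of completeness is available, so the argument must combine this directedness with the Zorn-style maximality of $H$ to upgrade a ``minimum below $x$'' statement into a genuine least upper bound of $H^{\oplus}$ in $E$.
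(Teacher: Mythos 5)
Your overall strategy---Zorn's lemma on orthogonal families of hypermeager elements bounded by $x$, extension by a nonzero hypermeager element below $x\ominus u$ via Lemma~\ref{semea}, and (W+) to pass from ``no upper bound of $H^\oplus$ strictly below $x$'' to ``$x$ is the least upper bound''---is exactly the paper's. But there is a genuine gap: you work with orthogonal \emph{subsets} of $\HMea(E)\setminus\{0\}$, whereas the paper works with orthogonal \emph{multisets} (systems in which an element may occur several times). Your contradiction step asserts that $H\cup\{h\}$ ``properly contains $H$'', which fails whenever the hypermeager element $h\leq x\ominus u$ you produce already lies in $H$; nothing in the construction rules this out. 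The failure is not merely cosmetic: in the five-element MV-chain $E=\{0,a,2a,3a,4a=1\}$ one has $\Sh(E)=\{0,1\}$ and $\HMea(E)=\{0,a,2a\}$, and for $x=2a$ the set $H=\{a\}$ is maximal in your $\mathcal{F}$ (you cannot adjoin $2a$, since $a\oplus 2a=3a\not\leq 2a$, and you cannot adjoin $a$ again because $H$ is a set), yet $\bigoplus H=a\neq x$. Here $u=a$ is an upper bound of $H^\oplus$ strictly below $x$, the only nonzero hypermeager element below $x\ominus u=a$ is $a$ itself, and $H\cup\{a\}=H$, so no contradiction arises. The correct representation $2a=a\oplus a$ needs $a$ to occur twice.

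The repair is exactly the paper's device: run the Zorn argument over orthogonal multisets (indexed families) of hypermeager elements, where the multiset sum $Z\uplus\{h\}$ is always strictly larger than $Z$. This is also where the Archimedean hypothesis---which your argument never invokes, a warning sign---enters: it guarantees that each nonzero element occurs only finitely many times in any member of the collection, so that the union of a chain is again a legitimate orthogonal system. With that change, your two-step use of (W+) at the end (first showing $x$ is the only upper bound of $H^\oplus$ below $x$, then deducing $x\leq v$ for every upper bound $v$) is a correct, slightly more explicit rendering of the paper's single sentence, and the rest of your argument goes through.
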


\begin{proof}
Let $y\in \Mea(E)$. Consider the set $\EuScript A$ of all orthogonal systems, precisely
multisets, $A$ of hypermeager elements for which $y$ is an upper bound of $A^\oplus$.
Since the multiset union of any chain in $\EuScript A$ belongs to $\EuScript
A$, there exists a maximal element $Z$ in $\EuScript A$. Since $E$ is Archimedean any 
element of $Z$ is contained in $Z$ only finitely many times. If $y$ is not the
supremum of $Z^\oplus$, there exists by the condition (W+) an upper bound $z$ of $Z^\oplus$
for which $z<y$. Since $y$ is meager, $y\ominus z\ne 0$ is meager too, and therefore
there exists a non-zero hypermeager  element $h$ such that $h\leq y\ominus z$.  
Obviously the multiset sum $Z \uplus \{h\}$
belongs to $\EuScript A$, which contradicts the assumption of maximality of $Z$.
\end{proof}

The following statement generalizes \cite[Theorem 13]{jenca}. 

\begin{corollary}\label{shcmea}
Let $E$ be an Archimedean sharply dominating effect algebra fulfilling the condition (W+). 
Then every element  $x\in E$ is the sum of $\widetilde{x}$ and of the
orthosum of a system of hypermeager elements.
\end{corollary}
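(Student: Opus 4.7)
The plan is to combine Statement \ref{gejza} with the freshly proved Proposition \ref{hcmea}. Since $E$ is assumed sharply dominating, Statement \ref{gejza} immediately supplies a canonical decomposition $x=\widetilde{x}\oplus(x\ominus\widetilde{x})$ for every $x\in E$, where $\widetilde{x}\in\Sh(E)$ and $x\ominus\widetilde{x}\in\Mea(E)$. Since $E$ is also Archimedean and fulfills (W+), the hypotheses of Proposition \ref{hcmea} apply to the meager element $x\ominus\widetilde{x}$.

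First I would invoke Statement \ref{gejza} to write $x=\widetilde{x}\oplus x_M$ with $x_M:=x\ominus\widetilde{x}\in\Mea(E)$. Second, I would apply Proposition \ref{hcmea} to $x_M$, obtaining an orthogonal family $(h_\kappa)_{\kappa\in H}$ of hypermeager elements whose orthosum equals $x_M$. Finally, substituting this decomposition of $x_M$ back into the expression for $x$ yields
\[
x=\widetilde{x}\oplus\bigoplus_{\kappa\in H}h_\kappa,
\]
which is precisely the conclusion.

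There is no serious obstacle here: both ingredients, namely the sharp/meager splitting and the hypermeager refinement of meager elements, have been established in full generality under exactly the hypotheses assumed in the corollary. The only minor point worth a word of justification is that the orthosum on the right-hand side combined with $\widetilde{x}$ is well-defined as a sum in $E$; this is immediate because $\widetilde{x}\oplus x_M$ exists by Statement \ref{gejza} and $x_M=\bigoplus_{\kappa\in H}h_\kappa$ by Proposition \ref{hcmea}, so associativity of $\oplus$ with the supremum defining the orthogonal sum gives the combined decomposition asserted.
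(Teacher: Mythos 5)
Your proposal is correct and is exactly the intended argument: the paper leaves this corollary without an explicit proof precisely because it follows by composing Statement \ref{gejza} (the sharp/meager decomposition $x=\widetilde{x}\oplus(x\ominus\widetilde{x})$ in a sharply dominating effect algebra) with Proposition \ref{hcmea} applied to the meager part $x\ominus\widetilde{x}$. Your closing remark about the well-definedness of the combined sum is a reasonable extra sanity check but not a point of divergence from the paper.
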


\begin{lemma} \label{minima}
Let $E$ be an  effect algebra having the maximality property,
let $u,v\in E$, and let $a,b$ be two maximal lower bounds of $u,v$.
There exist elements $y,z$ for which $y \leq u$, $z \leq v$, $a,b$ are
maximal lower bounds of $y,z$ and $y,z$ are minimal upper bounds of $a,b$.
\end{lemma}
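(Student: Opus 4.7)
The plan is to construct $y$ and $z$ by shrinking $u$ and $v$ to minimal upper bounds of $\{a,b\}$, and then to verify that $a$ and $b$ remain maximal lower bounds with respect to the tightened pair.

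First I would dualise the maximality property using the involution $x\mapsto x'$. In an effect algebra, $t\le u$ and $t\le v$ if and only if $u',v'\le t'$, and this bijection turns maximal lower bounds of $\{u,v\}$ into minimal upper bounds of $\{u',v'\}$. Consequently the formulation recalled before the lemma, namely that every lower bound of a pair lies below a maximal lower bound of that pair, is equivalent to the statement that every upper bound of a pair lies above a minimal upper bound of that pair. Since $a,b\le u$ and $a,b\le v$, both $u$ and $v$ are upper bounds of $\{a,b\}$. I may therefore select a minimal upper bound $y$ of $\{a,b\}$ with $y\le u$ and a minimal upper bound $z$ of $\{a,b\}$ with $z\le v$. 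This immediately secures $y\le u$, $z\le v$, and the condition that $y,z$ are minimal upper bounds of $a,b$.

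For the remaining condition I would verify that $a$ is a maximal lower bound of $\{y,z\}$, the argument for $b$ being identical. It is a lower bound since $a\le y$ and $a\le z$ by the choice of $y,z$ as upper bounds of $\{a,b\}$. Now let $c$ satisfy $a\le c$, $c\le y$, and $c\le z$. Then $c\le y\le u$ and $c\le z\le v$, so $c$ is a lower bound of $\{u,v\}$ above $a$; the hypothesised maximality of $a$ as a lower bound of $\{u,v\}$ forces $c=a$.

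The only point that needs care is the dualisation in the first step, which permits us to find a minimal upper bound of $\{a,b\}$ beneath each of $u$ and $v$ rather than merely somewhere in $E$; everything afterwards is a routine bookkeeping of inequalities, and I do not anticipate any real obstacle.
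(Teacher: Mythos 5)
Your proof is correct; the paper gives no argument at all here (its proof reads only ``Straightforward''), and your route --- dualising the maximality property via $x\mapsto x'$ to pick minimal upper bounds $y\leq u$, $z\leq v$ of $\{a,b\}$, then noting that any lower bound of $\{y,z\}$ above $a$ (or $b$) is a lower bound of $\{u,v\}$ and hence equals $a$ (or $b$) by maximality --- is evidently the intended one.
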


\begin{proof}
Straightforward. 
\end{proof}

\begin{lemma}[Shifting lemma] \label{shifting}
Let $E$ be an effect algebra  having the maximality property,
let $u,v\in E$, and let $a_1,b_1$ be two maximal lower bounds of $u,v$.
There exist elements $y,z$ and two maximal lower bounds $a,b$ of $y,z$
for which $y \leq u$, $z \leq v$, $a \leq a_1$, $b \leq b_1$, $a \wedge
b=0$, $a,b$ are
maximal lower bounds of $y,z$ and $y,z$ are minimal upper bounds of $a,b$.
Furthermore, $(y \ominus a) \wedge (z \ominus a) = 0$,
$(y \ominus b) \wedge (z \ominus b) = 0$,
$(y \ominus a) \wedge (y \ominus b) = 0$,
$(z \ominus a) \wedge (z \ominus b) = 0$.
\end{lemma}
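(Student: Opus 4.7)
The plan is to start from the previous \textbf{Lemma \ref{minima}} applied to $u,v$ and the given maximal lower bounds $a_1,b_1$: this produces elements $y_1\leq u$, $z_1\leq v$ such that $a_1,b_1$ are maximal lower bounds of $y_1,z_1$ and $y_1,z_1$ are minimal upper bounds of $a_1,b_1$. The ``shifting'' idea is then to subtract from everything a maximal common lower bound of $a_1$ and $b_1$, whose existence is guaranteed by the maximality property. So pick a maximal lower bound $c$ of $a_1,b_1$ and define
\begin{equation*}
a := a_1 \ominus c, \qquad b := b_1 \ominus c, \qquad y := y_1 \ominus c, \qquad z := z_1 \ominus c.
\end{equation*}
These are well-defined because $c \leq a_1,b_1 \leq y_1,z_1$. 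Clearly $a\leq a_1$, $b\leq b_1$, $y\leq y_1 \leq u$ and $z\leq z_1 \leq v$, and $a,b$ are lower bounds of $y,z$.

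Next I would verify the four structural properties in turn by pushing each would-be improvement back up by $c$ and invoking the maximality/minimality of the data given by Lemma \ref{minima}. Concretely: if $d$ is a common lower bound of $a,b$, then $c\oplus d$ is defined and is a lower bound of $a_1,b_1$ above $c$, forcing $c\oplus d = c$ and $d=0$, which gives $a\wedge b =0$. If $a^*\geq a$ is a lower bound of $y,z$, then $a^*\oplus c$ is a lower bound of $y_1,z_1$ above $a_1$; maximality of $a_1$ forces $a^*=a$, and symmetrically for $b$. If $y^*\leq y$ is an upper bound of $a,b$, then $y^*\oplus c\leq y_1$ is an upper bound of $a_1,b_1$; minimality of $y_1$ forces $y^*=y$, and symmetrically for $z$.

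For the ``furthermore'' assertions I would use the freshly established maximality/minimality of $a,b,y,z$ directly. For $(y\ominus a)\wedge (z\ominus a)=0$: a common lower bound $e$ of $y\ominus a$ and $z\ominus a$ yields $a\oplus e$, a lower bound of $y,z$ above $a$, whence $e=0$ by maximality of $a$; an identical argument gives $(y\ominus b)\wedge (z\ominus b)=0$. For $(y\ominus a)\wedge (y\ominus b)=0$: a common lower bound $f$ gives $y\ominus f$, which still dominates both $a$ and $b$ while being $\leq y$, and minimality of $y$ as an upper bound of $a,b$ forces $f=0$; the symmetric argument handles $(z\ominus a)\wedge (z\ominus b)=0$.

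The only delicate point I anticipate is bookkeeping of the defined-ness of all the $\oplus$/$\ominus$ expressions (in particular that $a^*\oplus c$, $y^*\oplus c$, $c\oplus d$, $a\oplus e$, and $y\ominus f$ really exist), but each case follows from the chain $c\leq a_1\leq y_1\leq u$ (and its analogues) together with the starting hypothesis that the object being added/subtracted already lies below the appropriate element. No additional axiom beyond the maximality property (needed once to produce $c$) and Lemma \ref{minima} is used, so the proof should be essentially a short, self-contained bookkeeping argument.
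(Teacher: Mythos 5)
Your proof is correct and follows essentially the same route as the paper's: both rest on Lemma~\ref{minima} combined with subtracting a maximal common lower bound $c$ of $a_1,b_1$ (whose existence is exactly what the maximality property provides), the only difference being that the paper forms $u\ominus c$, $v\ominus c$, $a_1\ominus c$, $b_1\ominus c$ first and then invokes Lemma~\ref{minima}, whereas you invoke the lemma first and subtract $c$ afterwards --- an equivalent reordering. Your explicit verifications of the four ``furthermore'' identities via the maximality of $a,b$ and the minimality of $y,z$ are also correct (the paper leaves these implicit).
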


\begin{proof} Let $c$ be a maximal lower bound of  $a_1,b_1$. Let 
us put $y_1=u\ominus c$, $z_1=v\ominus c$, $a=a_1\ominus c$ and 
$b=b_1\ominus c$. Evidently, $a\wedge b=0$, $y_1 \leq u$, $z_1 \leq v$, 
$a \leq a_1$, $b \leq b_1$ and $a,b$ are
maximal lower bounds of $y,z$. By Lemma \ref{minima} there exist elements $y,z$ 
for which $y \leq y_1$, $z \leq z_1$, $a,b$ are
maximal lower bounds of $y,z$ and $y,z$ are minimal upper bounds of $a,b$.
\end{proof}

The Shifting lemma provides the following {\em minimax structure}.

\newcommand{\minimax}{\begin{picture}(6,5)
\put(0,1){\line(0,1){4}}
\put(6,1){\line(0,1){4}}
\path(0,3)(6,1)(3,0)(0,1)(6,3)
\put(3,0){\whiten\circle{0.3}}
\put(0,1){\whiten\circle{0.3}}
\put(0,3){\whiten\circle{0.3}}
\put(0,5){\whiten\circle{0.3}}
\put(6,1){\whiten\circle{0.3}}
\put(6,3){\whiten\circle{0.3}}
\put(6,5){\whiten\circle{0.3}}

\put(3,0){\arc{1}{-2.77}{-0.37}}
\put(0,1){\arc{1}{-1.57}{-0.37}}
\put(0,3){\arc{1}{0.37}{1.57}}
\put(6,1){\arc{1}{-2.77}{-1.57}}
\put(6,3){\arc{1}{1.57}{2.77}}
\end{picture}
}

\newcommand{\maximin}{\begin{picture}(6,5)
\put(0,0){\line(0,1){4}}
\put(6,0){\line(0,1){4}}
\path(0,2)(6,4)(3,5)(0,4)(6,2)
\put(3,5){\whiten\circle{0.3}}
\put(0,0){\whiten\circle{0.3}}
\put(0,2){\whiten\circle{0.3}}
\put(0,4){\whiten\circle{0.3}}
\put(6,0){\whiten\circle{0.3}}
\put(6,2){\whiten\circle{0.3}}
\put(6,4){\whiten\circle{0.3}}

\put(3,5){\arc{1}{0.37}{2.77}}
\put(0,2){\arc{1}{-1.57}{-0.37}}
\put(0,4){\arc{1}{0.37}{1.57}}
\put(6,2){\arc{1}{-2.77}{-1.57}}
\put(6,4){\arc{1}{1.57}{2.77}}
\end{picture}
}

\begin{center}
\setlength{\unitlength}{3 mm}

\begin{picture}(6,5)
\put(0,0){\minimax}
\put(-0.6,1){\makebox(0,0){$a$}}
\put(-0.6,3){\makebox(0,0){$y$}}
\put(-0.6,5){\makebox(0,0){$u$}}
\put(6.6,1){\makebox(0,0){$b$}}
\put(6.6,3){\makebox(0,0){$z$}}
\put(6.6,5){\makebox(0,0){$v$}}
\put(3,-1){\makebox(0,0){$0$}}
\end{picture}
\vspace{1 mm}
\end{center}

\begin{proposition} \label{minimaxhyp}
Let $E$ be a homogeneous effect algebra 
having the maximality property.
Every two hypermeager elements $u,v$ possess $u \wedge v$.
\end{proposition}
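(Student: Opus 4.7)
The plan is to argue that every two maximal lower bounds of $\{u,v\}$, which exist thanks to the maximality property, must coincide; by the reformulation of the maximality property noted just after its definition, the uniqueness of such a maximal lower bound is exactly the existence of $u\wedge v$.

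First I would take two maximal lower bounds $a_1,b_1$ of $u,v$ and invoke the Shifting lemma (Lemma \ref{shifting}) to produce refinements $y\le u$, $z\le v$ together with maximal lower bounds $a=a_1\ominus c$, $b=b_1\ominus c$ of $y,z$ satisfying $a\wedge b=0$, where $c$ is a maximal lower bound of $a_1,b_1$. Because $u\in\HMea(E)$ gives $u\le u'$, the chain $y\le u\le u'\le y'$ shows that $y$ is itself hypermeager; analogously $z\le z'$. Since $a\le y$ implies $y'\le a'$, we obtain
$$
a\le y = b\oplus(y\ominus b)\le a',
$$
so homogeneity in the form of Statement \ref{gejzablok} yields a decomposition $a=p\oplus q$ with $p\le b$ and $q\le y\ominus b$. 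The disjointness $a\wedge b=0$ supplied by the Shifting lemma forces $p=0$, hence $a\le y\ominus b$, equivalently $a\oplus b\le y$. Running the identical argument with $z$ in place of $y$ yields $a\oplus b\le z$.

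Therefore $a\oplus b$ is a common lower bound of $y$ and $z$ with $a\le a\oplus b$. Maximality of $a$ among lower bounds of $y,z$ then forces $b=0$, which gives $b_1=c\le a_1$, and maximality of $b_1$ among lower bounds of $u,v$ in turn gives $a_1=b_1$. This establishes uniqueness of the maximal lower bound of $\{u,v\}$ and hence the existence of $u\wedge v$.

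The step I expect to be the main obstacle is verifying the double inequality $a\le y\le a'$ that is needed to trigger homogeneity. This relies on first propagating the hypermeagerness of $u$ down to the Shifting-lemma witness $y$ through $y\le u\le u'\le y'$, and then exploiting $a\le y$ to convert $y\le y'$ into $y\le a'$. Once this chain is in place, the homogeneous decomposition of $a$ across $b\oplus(y\ominus b)$ meshes perfectly with the disjointness $a\wedge b=0$, and the remainder of the argument is essentially bookkeeping.
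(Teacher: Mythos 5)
Your proof is correct and follows essentially the same route as the paper: apply the Shifting lemma to two maximal lower bounds $a_1,b_1$, observe that hypermeagerness passes from $u,v$ down to $y,z$ (and to $a$), and use homogeneity on the chain $a\le b\oplus(y\ominus b)\le a'$ together with $a\wedge b=0$ to obtain $a\le y\ominus b$ and $a\le z\ominus b$. The only (harmless) divergence is the finishing move: the paper invokes the extra clause $(y\ominus b)\wedge(z\ominus b)=0$ of the Shifting lemma to conclude $a=0$, whereas you conclude $b=0$ from the maximality of $a$ applied to the lower bound $a\oplus b$ of $\{y,z\}$; either way the two maximal lower bounds coincide and $u\wedge v$ exists.
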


\begin{proof}
Consider the minimax structure obtained by the Shifting lemma.
{\begin{center}
\setlength{\unitlength}{3 mm}
\begin{picture}(6,12)
\put(0,5){\line(0,1){2}}
\put(6,5){\line(0,1){2}}
\put(0,7){\maximin}
\put(0,0){\minimax}
\put(-0.6,1){\makebox(0,0){$a$}}
\put(-0.6,3){\makebox(0,0){$y$}}
\put(-0.6,5){\makebox(0,0){$u$}}
\put(6.6,1){\makebox(0,0){$b$}}
\put(6.6,3){\makebox(0,0){$z$}}
\put(6.6,5){\makebox(0,0){$v$}}
\put(-0.6,11){\makebox(0,0){$a'$}}
\put(-0.6,9){\makebox(0,0){$y'$}}
\put(-0.6,7){\makebox(0,0){$u'$}}
\put(6.6,11){\makebox(0,0){$b'$}}
\put(6.6,9){\makebox(0,0){$z'$}}
\put(6.6,7){\makebox(0,0){$v'$}}
\put(3,-1){\makebox(0,0){$0$}}
\end{picture}
\vspace{1 mm}
\end{center}}
Hence $a$ and $b$ are hypermeager and we have the following implications:

\noindent{}$a\leq(y\ominus b)\oplus b\leq a'
\implies
(\exists a_1\leq y\ominus b)(\exists a_2\leq b)\ a=a_1\oplus a_2
\overset{a\wedge b=0}{\implies}a_2=0, a=a_1\leq y\ominus b$ and\\
$a\leq(z\ominus b)\oplus b\leq a'
\implies
(\exists a_1\leq z\ominus b)(\exists a_2\leq b)\ a=a_1\oplus a_2
\overset{a\wedge b=0}{\implies}a_2=0, a=a_1\leq z\ominus b$.\\
Since $(y\ominus b)\wedge (z\ominus b)=0$, it follows $a=0$.
\end{proof}

\begin{proposition} \label{minimaxort}
Let $E$ be a homogeneous effect algebra 
having the maximality property.
For every orthogonal elements $u,v$, $u \wedge v$
and $u \vee_{[0,u \oplus v]} v$ exist and $[0,u \wedge 
v] \subseteq B$ for every block $B$ containing $u$ or $v$.
\end{proposition}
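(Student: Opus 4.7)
The key observation is that every common lower bound of the orthogonal pair $u,v$ is automatically hypermeager: since $u\oplus v$ exists we have $v\leq u'$, so $y\leq u$ and $y\leq v$ together give $y\leq u$ and $y\leq u'$, whence $y\oplus y$ exists by Lemma \ref{semea}. This brings us close to the setting of Proposition \ref{minimaxhyp}, and I would mimic its proof.

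To prove that $u\wedge v$ exists, I would take any two maximal lower bounds $a_1,b_1$ of $u,v$ (available by Statement \ref{minimax}) and apply the Shifting Lemma \ref{shifting}. It supplies $y\leq u$, $z\leq v$ and maximal lower bounds $a\leq a_1$, $b\leq b_1$ of $y,z$ with $a\wedge b=0$ and $(y\ominus b)\wedge(z\ominus b)=0$, the element $a$ being of the form $a_1\ominus c$ for some maximal lower bound $c$ of $a_1,b_1$. From $a\leq u$ one obtains $u'\leq a'$, and hence $y\leq v\leq u'\leq a'$; symmetrically $z\leq a'$. Therefore $a\leq(y\ominus b)\oplus b=y\leq a'$, and homogeneity (Statement \ref{gejzablok}) yields a decomposition $a=a^{*}_1\oplus a_2$ with $a^{*}_1\leq y\ominus b$ and $a_2\leq b$. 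Since $a_2\leq a\wedge b=0$, we have $a\leq y\ominus b$; the symmetric argument gives $a\leq z\ominus b$, and $(y\ominus b)\wedge(z\ominus b)=0$ forces $a=0$. Then $a_1=c\leq b_1$, and the maximality of $a_1$ yields $a_1=b_1$. Since by Statement \ref{minimax} every lower bound of $u,v$ lies below a maximal one, this unique maximal lower bound must be $u\wedge v$.

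The remaining claims follow quickly. In the effect algebra $[0,u\oplus v]$ the complements of $u$ and $v$ are $v$ and $u$ respectively, and since $u\wedge v\leq u\leq u\oplus v$ the meet computed inside the interval coincides with $u\wedge v$ in $E$; standard De~Morgan duality then gives $u\vee_{[0,u\oplus v]}v=(u\oplus v)\ominus(u\wedge v)$. For the block statement, any $x\leq u\wedge v$ satisfies $x\leq u$ and $x\leq v\leq u'$, so Statement \ref{gejzasum}(viii) places $x$ in any block containing $u$ (and symmetrically for $v$).

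The main obstacle is the uniqueness-of-maximal-lower-bounds step: one must carefully maintain the inequalities $y,z\leq a'$ and feed them into the homogeneous decomposition, using the orthogonality of $u,v$ as a substitute for the hypermeagerness of $u,v$ that underpinned Proposition \ref{minimaxhyp}.
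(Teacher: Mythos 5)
Your proof is correct and follows essentially the same route as the paper: the Shifting Lemma produces the minimax configuration, orthogonality of $u,v$ yields $y\leq a'$ and $z\leq a'$ so that homogeneity (Statement~\ref{gejzablok}) forces $a=0$, uniqueness of the maximal lower bound then gives $u\wedge v$, and Statement~\ref{gejzasum}(viii) handles the block claim, with the join obtained by complementation in $[0,u\oplus v]$ exactly as in the paper. (Only a harmless slip: the inequality chain should read $z\leq v\leq u'\leq a'$ and, symmetrically, $y\leq u\leq v'\leq a'$, rather than $y\leq v\leq u'\leq a'$.)
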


\begin{proof}
Consider the minimax structure obtained by the Shifting lemma.
\begin{center}
\setlength{\unitlength}{3 mm}
\begin{picture}(6,12)
\put(0,5){\line(0,1){2}}
\put(6,5){\line(0,1){2}}
\put(0,7){\maximin}
\put(0,0){\minimax}
\put(-0.6,1){\makebox(0,0){$a$}}
\put(-0.6,3){\makebox(0,0){$y$}}
\put(-0.6,5){\makebox(0,0){$u$}}
\put(6.6,1){\makebox(0,0){$b$}}
\put(6.6,3){\makebox(0,0){$z$}}
\put(6.6,5){\makebox(0,0){$v$}}
\put(6.6,7){\makebox(0,0){$u'$}}
\put(-0.6,7){\makebox(0,0){$v'$}}
\put(-0.6,11){\makebox(0,0){$a'$}}
\put(6.6,9){\makebox(0,0){$y'$}}
\put(6.6,11){\makebox(0,0){$b'$}}
\put(-0.6,9){\makebox(0,0){$z'$}}
\put(3,-1){\makebox(0,0){$0$}}
\end{picture}
\vspace{1 mm}
\end{center}
Then $a$ and $b$ are hypermeager and we have the following implications:

\noindent{}$a\leq(y\ominus b)\oplus b\leq a'
\implies
(\exists a_1\leq y\ominus b)(\exists a_2\leq b)\ a=a_1\oplus a_2
\overset{a\wedge b=0}{\implies}a_2=0, a=a_1\leq y\ominus b$ and\\
$a\leq(z\ominus b)\oplus b\leq a'
\implies
(\exists a_1\leq z\ominus b)(\exists a_2\leq b)\ a=a_1\oplus a_2
\overset{a\wedge b=0}{\implies}a_2=0, a=a_1\leq z\ominus b$.\\ 

Since $(y\ominus b)\wedge (z\ominus b)=0$, it follows $a=0$.
Clearly $(u \oplus v) \ominus (u \wedge v)$ is the supremum of
$u,v$ in $[0,u \oplus v]$.

The remaining part of the Proposition follows by Statement 
\ref{gejzasum}, (viii).
\end{proof}

\begin{corollary} \label{minimaxsup}
Let $E$ be a homogeneous effect algebra 
having the maximality property.
For every element $u$, $u \wedge u'$ and $u \vee u'$ exist and $[0,u \wedge 
u'] \subseteq B$ for every block $B$ containing $u$.
\end{corollary}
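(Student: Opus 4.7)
The plan is to obtain this corollary as an immediate specialization of Proposition \ref{minimaxort} to the orthogonal pair $(u, u')$. First I would verify that $u$ and $u'$ are orthogonal in the required sense, which is immediate from axiom (Eiii): the element $u'$ is by definition the unique element satisfying $u \oplus u' = 1$. Having checked this, I can feed $v := u'$ into Proposition \ref{minimaxort} and read off the existence of $u \wedge u'$ in $E$, the existence of $u \vee_{[0,\,u\oplus u']} u'$ in the interval $[0, u \oplus u']$, and the containment $[0, u \wedge u'] \subseteq B$ for every block $B$ containing $u$ or $u'$.

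Two small observations are still needed. First, to upgrade the interval supremum to a genuine supremum in $E$, I would note that $u \oplus u' = 1$, so $[0, u \oplus u'] = [0, 1] = E$; therefore the supremum computed in this interval coincides with $u \vee u'$ computed in the whole algebra. Second, the block-containment conclusion of Proposition \ref{minimaxort} refers to blocks containing $u$ \emph{or} $u'$, whereas the corollary only mentions blocks containing $u$, so the specialization is strictly weaker in that respect and nothing has to be added. (Incidentally, since any block $B$ is a sub-effect algebra containing $1$, one has $u \in B \iff u' \in B$, so the two formulations are in fact equivalent.)

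I expect no real obstacle: the corollary is essentially a one-line deduction from Proposition \ref{minimaxort} combined with the trivial fact that the interval below $1$ is the whole algebra. The only mild care required is book-keeping about the scope of the join (interval vs.\ global), which is disposed of by the observation $u \oplus u' = 1$.
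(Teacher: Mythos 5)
Your proposal is correct and matches the paper's intent exactly: the corollary is stated without proof immediately after Proposition~\ref{minimaxort}, precisely because it is the specialization $v := u'$ (orthogonality being axiom (Eiii), and $[0, u \oplus u'] = [0,1] = E$ turning the interval join into the global join). Your bookkeeping remarks about the scope of the join and the block condition are accurate and complete.
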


\begin{corollary}\label{meetblock}
Let $E$ be a homogeneous effect algebra 
having the maximality property.
For any block $B$ and every elements $u,v\in B$
for which $u \wedge_B v=0$, $u \wedge v=0$.
\end{corollary}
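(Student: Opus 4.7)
The plan is to reduce the corollary to Proposition \ref{minimaxort}, which already delivers the existence of $u\wedge v$ (and its location inside $B$) once we know that $u$ and $v$ are orthogonal. So the first task is to promote the hypothesis $u\wedge_B v=0$ to genuine orthogonality in $E$.

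First, I would exploit the fact that $B$ is a block of the homogeneous effect algebra $E$ and hence, by Statement \ref{gejzasum}(iv), satisfies the Riesz decomposition property. In particular, any two elements of $B$ are compatible in $B$, so there exist $p,q,r\in B$ with $u=p\oplus q$, $v=q\oplus r$, and $p\oplus q\oplus r$ defined in $B$. Since $q\leq u$ and $q\leq v$, the assumption $u\wedge_B v=0$ forces $q=0$. Therefore $u=p$, $v=r$, and $u\oplus v=p\oplus r$ exists in $B$, and hence in $E$; in other words $u$ and $v$ are orthogonal in $E$.

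Next, I would invoke Proposition \ref{minimaxort} applied to the orthogonal pair $u,v$. That result guarantees that $u\wedge v$ exists in $E$ and, crucially, that $[0,u\wedge v]\subseteq B$ for every block of $E$ containing $u$ (or $v$). Since $u,v\in B$, this yields in particular $u\wedge v\in B$.

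Finally, I would close the argument with a single line: $u\wedge v$ is an element of $B$ that lies below both $u$ and $v$, so by the definition of the meet inside $B$ we have $u\wedge v\leq u\wedge_B v=0$, whence $u\wedge v=0$. There is no real obstacle here; the only point where one has to be careful is the first step, namely converting $u\wedge_B v=0$ into $u\oplus v$ existing — everything else is a direct citation of Proposition \ref{minimaxort} and the definition of a block meet.
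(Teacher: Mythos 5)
Your proof is correct and follows essentially the same route as the paper's: establish that $u\wedge_B v=0$ forces $u,v$ to be orthogonal, then apply Proposition~\ref{minimaxort} to get $u\wedge v$ with $[0,u\wedge v]\subseteq B$, whence $u\wedge v\le u\wedge_B v=0$. The only difference is that you spell out the orthogonality step via internal compatibility of the block (the $p,q,r$ decomposition with $q\le u\wedge_B v=0$), which the paper states without justification.
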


\begin{proof}
Since $u \wedge_B v=0$, elements $u,v$ are orthogonal.
By Proposition~\ref{minimaxort}, $u \wedge v$ exists and $[0,u \wedge v]
\subseteq B$. Therefore $u \wedge v=0$.
\end{proof}

\begin{theorem}
Let $E$ be a homogeneous effect algebra having the maximality property.
Then every block $B$ in $E$ is a lattice, 
and therefore satisfies the {difference-meet property}.
\end{theorem}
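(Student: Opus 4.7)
The plan is to show that for any $u, v \in B$ the meet $u \wedge v$ exists and lies in $B$, from which lattice-ness of $B$ follows by passing to complements. I begin by invoking the internal compatibility of $B$ (Statement~\ref{gejzasum}(iv)) to fix a compatibility decomposition $u = p \oplus q$, $v = q \oplus r$ with $p \oplus q \oplus r \in B$, chosen so that $q$ is maximal among the middle terms of such decompositions in $B$ (Zorn applied to the natural ordering by $q$-inclusion, with upper bounds of chains supplied by the minimax structure of $E$ and Statement~\ref{gejzablok}). Maximality of $q$ forces $p \wedge_B r = 0$: any nonzero $s \in B$ with $s \leq p$ and $s \leq r$ would give the strictly larger decomposition $(p \ominus s,\, q \oplus s,\, r \ominus s)$. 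By Corollary~\ref{meetblock}, this lifts to $p \wedge r = 0$ in $E$; the meet exists by Proposition~\ref{minimaxort} applied to the orthogonal pair $p, r$.

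Next I would verify that $q$ is a maximal lower bound of $u, v$ in $E$: any $q^{\prime} \geq q$ with $q^{\prime} \leq u, v$ satisfies $q^{\prime} \ominus q \leq u \ominus q = p$ and $q^{\prime} \ominus q \leq v \ominus q = r$, hence $q^{\prime} \ominus q \leq p \wedge r = 0$ and $q^{\prime} = q$. The central task is then to prove that $q$ is the \emph{unique} maximal lower bound of $u, v$, for then by the maximality property every lower bound of $u, v$ sits below $q$, yielding $u \wedge v = q \in B$. To this end, let $m$ be any maximal lower bound of $u, v$ and apply the Shifting Lemma (Lemma~\ref{shifting}) to the pair $(m, q)$: this gives $y \leq u$, $z \leq v$, a maximal lower bound $c$ of $\{m, q\}$, and $a = m \ominus c$, $b = q \ominus c$ with $a \wedge b = 0$, $a, b$ maximal lower bounds of $y, z$, $y, z$ minimal upper bounds of $a, b$, and the four minimax disjointness relations. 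It suffices to show $a = 0$: then $m = c \leq q$, and maximality of $m$ forces $m = q$.

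The main obstacle is this vanishing of $a$. Unlike the orthogonal setting of Proposition~\ref{minimaxort}, here $u$ and $v$ are not orthogonal, so we cannot directly conclude that $a$ is hypermeager from $a \leq u, v$. My plan is to decompose $a$ against the tripartition $(p, q, r)$: from $a \leq u = p \oplus q$ and $a \leq v = q \oplus r$, together with the minimax disjointness $(y \ominus a) \wedge (z \ominus a) = 0$, a homogeneity-and-RDP refinement (carried out inside $B$ using Statement~\ref{gejzasum}(viii) and Corollary~\ref{minimaxsup} to keep the pieces in $B$) should produce a component of $a$ that is a common lower bound of $p$ and $r$ in $B$, hence zero by $p \wedge_B r = 0$ and Corollary~\ref{meetblock}. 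The remaining component of $a$ lies below $q$, and since it is also $\leq b^{\prime}$ with $a \wedge b = 0$, it must vanish as well. Once $u \wedge v = q \in B$ is established, applying the argument to $u^{\prime}, v^{\prime}$ and using $u \vee v = (u^{\prime} \wedge v^{\prime})^{\prime}$ gives closure of $B$ under joins, so $B$ is a lattice. The difference-meet property is then immediate: for $x \leq y$ in $B$ and $z \in B$, the element $(y \ominus x) \wedge z$ exists in $B$ by lattice closure.
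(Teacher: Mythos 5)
There is a genuine gap, and it sits exactly where you flag ``the main obstacle'': your central claim --- that $q$ is the \emph{unique} maximal lower bound of $u,v$ in $E$, so that $u\wedge_E v$ exists and equals $q$ --- is false in general, so no refinement argument can make your element $a$ vanish. The paper's own Example~\ref{exa_3} is a counterexample: there $E$ is a finite homogeneous orthocomplete orthoalgebra (hence it has the maximality property), the elements $a'$ and $b'$ both lie in the block $B_1=\{0,a,b,f,a\oplus b=f',b',a',1\}$, and since $a,b$ have the two distinct minimal upper bounds $a\oplus b=f'$ and $a\oplus c=e'$, dually $a'$ and $b'$ have the two incomparable maximal lower bounds $f$ and $e$; thus $a'\wedge_E b'$ does not exist, although $a'\wedge_{B_1}b'=f$. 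The theorem only asserts that $B$ is a lattice \emph{as a poset in its own right}, i.e., that among the lower bounds of $u,v$ \emph{lying in $B$} there is a greatest one. That restriction is what makes the proof work: the paper takes an arbitrary lower bound $w\in B$ of $y=a\oplus b$ and $z=a\oplus c$ (with $b\wedge c=0$ arranged by a single application of Proposition~\ref{minimaxort}, no Zorn needed), and applies the Riesz decomposition property \emph{of the block} twice to write $w=a_1\oplus a_2\oplus c_2$ with $a_1\oplus a_2\le a$ and $c_2\le b\wedge c=0$, whence $w\le a$. Your maximal lower bound $m$, by contrast, is an arbitrary element of $E$: it need not belong to $B$, so neither RDP in $B$ nor Statement~\ref{gejzasum}(viii) applies to it, and homogeneity cannot be invoked either, since nothing forces $a\le p\oplus q\le a'$.

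A second, smaller defect: the Zorn argument producing a decomposition with maximal middle term $q$ is unsupported --- chains of such decompositions have no reason to admit upper bounds in an effect algebra carrying no completeness hypothesis. That step is repairable (replace $q$ by $q\oplus(p\wedge r)$ using Proposition~\ref{minimaxort}, which is in effect what the paper does), but the first defect is not: you must restrict attention to lower bounds inside $B$ and exploit the RDP available there, rather than trying to control all maximal lower bounds in $E$.
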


\begin{proof}
Let $B$ be a block and $y,z \in B$. There exist elements $a,b,c \in B$ for
which $y=a \oplus b$, $z=a \oplus c$ and $a \oplus b \oplus c$ is defined.
Hence $b \wedge c$ exists and $b \wedge c \in B$ in virtue of
Proposition~\ref{minimaxort}.
Clearly, $a \oplus (b \wedge c)$ is a maximal lower bound of $y, z$.
Consequently, without loss of generality we may assume $b \wedge c=0$.
Suppose $v$ is a lower bound of $y,z$ in $B$. Hence  $v \leq a \oplus b$ and therefore
there exist $a_1 \leq a$ and $b_1 \leq b$ in $B$ for which $a_1 \oplus b_1=v$.
Now $v \ominus a_1=b_1 \leq b$.
Further $v \leq a \oplus c$. Therefore $v \ominus a_1 \leq (a \ominus a_1)
\oplus c$. There exist elements $a_2 \leq a \ominus a_1$ and $c_2 \leq c$ in
$B$ for which $v \ominus a_1=a_2 \oplus c_2$.
To sum up, $(v \ominus a_1) \ominus a_2=c_2 \leq c$ and
$(v \ominus a_1) \ominus a_2 \leq b \ominus a_2 \leq b$,
which together yields
$(v \ominus a_1) \ominus a_2=c_2 \leq b \wedge c=0$.
Consequently, $v=a_1 \oplus a_2 \leq a_1 \oplus (a \ominus a_1)=a$,
and $a$ is the infimum of $y,z$. This yields that $B$ is a lattice. 
\end{proof}

The preceding theorem immediately yields the following statements.

\begin{corollary} Let $E$ be a homogeneous effect algebra  having the maximality property.
Then  $E$  can be covered  by MV-algebras  which form
blocks.
\end{corollary}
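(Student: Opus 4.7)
The corollary should fall out by assembling three pieces already on the table, together with one well-known classical fact. Here is how I would argue.

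First, by the theorem immediately preceding the corollary, every block $B$ of $E$ is a lattice. Second, by the very definition of a block in a homogeneous effect algebra (Statement \ref{gejzasum}, (iv)), $B$ is a maximal sub-effect algebra of $E$ satisfying the Riesz decomposition property. Thus each block $B$ is simultaneously a lattice and carries RDP, so $B$ is a lattice effect algebra with the Riesz decomposition property. The classical result of Chovanec and K\^opka (equivalently, of Bennett and Foulis) asserts that a lattice effect algebra satisfies RDP if and only if it is an MV-effect algebra; invoking this gives that every block $B$ is an MV-algebra.

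Third, again by Statement \ref{gejzasum}, (v), every homogeneous effect algebra is the set-theoretic union of its blocks. Applying this to $E$ yields that $E$ is covered by its blocks, each of which has now been shown to be an MV-algebra. This is exactly the statement of the corollary.

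I do not anticipate any obstacle; the entire content is bookkeeping once the preceding theorem is in hand. The only non-trivial external ingredient is the Chovanec--K\^opka characterization of MV-effect algebras as the lattice effect algebras with RDP, which is standard and can be cited from \cite{dvurec}. One small point worth stating explicitly in the write-up is that the MV-algebra structure on $B$ is the one inherited from the partial operation $\oplus$ of $E$ restricted to $B$, so the covering is internal, i.e.\ by genuine sub-effect-algebras of $E$ rather than by abstractly isomorphic MV-algebras.
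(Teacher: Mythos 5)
Your proof is correct and is precisely the argument the paper intends: the authors give no explicit proof, stating only that the preceding theorem "immediately yields" the corollary, and the implicit reasoning is exactly your chain (blocks are lattices by the theorem, blocks carry RDP by definition, lattice plus RDP gives an MV-effect algebra by the classical characterization, and $E$ is the union of its blocks by Statement \ref{gejzasum}~(v)). Your write-up simply makes the bookkeeping explicit.
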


\begin{corollary} Let $E$ be an Archimedean homogeneous effect algebra fulfilling the condition (W+).
Then  $E$  can be covered  by Archimedean  MV-algebras  which form
blocks.
\end{corollary}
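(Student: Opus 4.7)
The plan is to reduce the corollary to the two immediately preceding results by verifying the two ingredients: the maximality property (to invoke the previous corollary and obtain MV-algebra blocks) and inheritance of the Archimedean property from $E$ to each block.

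First, I would observe that by Statement~\ref{minimax}, an Archimedean effect algebra fulfilling the condition (W+) automatically has the maximality property. Hence $E$ satisfies the hypothesis of the previous corollary, and we conclude that $E$ is covered by blocks which are MV-algebras. (Recall that by Statement~\ref{gejzasum}~(v), every homogeneous effect algebra is in fact the union of its blocks; here, by the preceding theorem, these blocks are lattice ordered and, having the Riesz decomposition property inherited from being blocks, they are MV-algebras.)

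It remains to verify that each such block $B$ is Archimedean as an MV-algebra. This is the only new content, and it is essentially routine. A block $B$ is a sub-effect algebra of $E$, so the partial operation $\oplus$ on $B$ is the restriction of that of $E$, and consequently the order on $B$ is the restriction of the order on $E$. Therefore, for any $x \in B$, the multiples $nx$ computed in $B$ coincide with those computed in $E$, and in particular $\ord_B(x) \leq \ord_E(x) < \infty$ since $E$ is Archimedean. Thus $B$ is Archimedean.

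I do not foresee a real obstacle here, since both ingredients are essentially packaged by earlier results: the maximality property is delivered by Statement~\ref{minimax}, the lattice structure of blocks by the preceding theorem, and the MV-algebra structure by the preceding corollary. The only mildly subtle point is to be careful that ``Archimedean'' for MV-algebras matches the effect-algebraic notion of Archimedean used throughout the paper, which is immediate because a block inherits both $\oplus$ and $\leq$ from $E$.
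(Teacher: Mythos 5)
Your proposal is correct and follows essentially the same route the paper intends: the paper gives no explicit proof, stating only that the preceding theorem "immediately yields" this corollary, with the implicit chain being exactly your Statement~\ref{minimax} $\Rightarrow$ maximality property $\Rightarrow$ previous corollary, plus the routine observation that blocks, being sub-effect algebras, inherit the Archimedean property. Your write-up simply makes explicit the details the paper leaves to the reader.
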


Note that as in \cite{pulmblok} we obtain that Archimedean homogeneous effect algebras 
fulfilling the condition (W+) (in particular  orthocomplete 
homogeneous effect algebras) can  be  covered  by  ranges of observables. 

\begin{proposition}
Let $E$ be an orthocomplete homogeneous effect algebra. 
Then every block in $E$ is a lattice.
\end{proposition}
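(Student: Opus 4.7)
The plan is to derive this proposition as an immediate consequence of the preceding theorem together with Tkadlec's result on orthocomplete effect algebras. The two ingredients are already in place in the excerpt, so the argument is essentially a two-line reduction rather than anything substantial.

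First, I would invoke the statement cited earlier from \cite{tkadlec} that every orthocomplete effect algebra fulfills the maximality property. Since $E$ is assumed to be orthocomplete, this means $\{u,v\}$ has a maximal lower bound for every $u,v\in E$. Combined with the hypothesis that $E$ is homogeneous, we are then precisely in the setting of the preceding theorem which asserts that every block of a homogeneous effect algebra with the maximality property is a lattice. Applying that theorem directly finishes the proof.

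Since the argument is a direct quotation, there is no genuine obstacle; the only thing to watch is that the proposition speaks of \emph{orthocomplete homogeneous} effect algebras, so I should explicitly note that orthocompleteness is only used to supply the maximality property via the cited Tkadlec result, and that homogeneity is used in order to apply the preceding theorem. One could alternatively route through Proposition~\ref{minimaxort} and Corollary~\ref{meetblock} to re-derive lattice-ness directly for orthocomplete homogeneous $E$, but this merely reproduces the proof of the previous theorem and is unnecessary. Therefore the cleanest presentation is the one-step reduction described above.
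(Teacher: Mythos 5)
Your reduction is exactly what the paper intends: the proposition is stated without proof as an immediate consequence of the preceding theorem, using the cited Tkadlec result that orthocomplete effect algebras fulfill the maximality property (and condition (W+)). Your proposal is correct and follows essentially the same route.
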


\begin{corollary}
Finite homogeneous effect algebras  are covered by MV-algebras.
\end{corollary}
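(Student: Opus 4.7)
The plan is to derive this corollary as an immediate specialisation of the earlier Corollary on homogeneous effect algebras with the maximality property, which already states that such algebras are covered by MV-algebras forming blocks. The only task is to verify the hypotheses of that corollary in the finite setting.

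First I would check that every finite effect algebra automatically enjoys the maximality property. Given $u,v\in E$, the set of common lower bounds of $\{u,v\}$ is a nonempty finite subposet of $E$, so every element of it lies below some maximal element --- precisely the condition recorded just after the definition of the maximality property. An equivalent route is to observe that a finite effect algebra is trivially orthocomplete, since for any orthogonal system $G$ the set $G^\oplus$ is finite and directed and therefore contains its own maximum; one may then invoke the theorem of Tkadlec quoted earlier, which hands over the maximality property for free.

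With the maximality property in hand, the preceding Corollary applies directly to the finite homogeneous effect algebra $E$ and produces the desired covering by MV-algebras forming blocks, since $E$ equals the union of its blocks by Statement \ref{gejzasum}(v).

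Since the substantive content --- that every block in a homogeneous effect algebra with the maximality property is a lattice, and hence (being a sub-effect algebra satisfying RDP by Statement \ref{gejzasum}(iv)) an MV-algebra --- has already been carried out in the preceding theorem and its corollary, no genuinely new obstacle arises; the final corollary is a clean specialisation to the finite case.
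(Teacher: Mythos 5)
Your argument is correct and follows essentially the same route the paper intends: the corollary is placed immediately after the proposition that orthocomplete homogeneous effect algebras have lattice-ordered blocks, and the intended justification is exactly that a finite effect algebra is orthocomplete (hence has the maximality property by Tkadlec's theorem), so the earlier covering corollary applies. Your verification that finiteness yields the maximality property, and the observation that each block is a lattice with RDP and hence an MV-algebra, match the paper's chain of reasoning.
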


\begin{corollary}
Let $E$ be a sharply dominating
homogeneous effect algebra  having the maximality property.
For any $y \in \Mea(E)$, $y \wedge (\wwidehat{y} \ominus y)$ exists and
$y \wedge (\wwidehat{y} \ominus y)=y \wedge y'$.
\end{corollary}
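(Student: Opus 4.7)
The plan is to prove the equality by sandwiching both meets using the tools already established in the excerpt.

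First I would settle existence. Since $y\oplus(\wwidehat{y}\ominus y)=\wwidehat{y}$, the elements $y$ and $\wwidehat{y}\ominus y$ are orthogonal, so Proposition~\ref{minimaxort} yields that $y\wedge(\wwidehat{y}\ominus y)$ exists. The meet $y\wedge y'$ exists directly by Corollary~\ref{minimaxsup}.

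For the easy inequality $y\wedge(\wwidehat{y}\ominus y)\leq y\wedge y'$, I would only need to observe that $\wwidehat{y}\ominus y\leq y'$: adding $y$ to both sides, this is the trivial $\wwidehat{y}\leq 1=y\oplus y'$. Hence any lower bound of $\{y,\wwidehat{y}\ominus y\}$ is automatically a lower bound of $\{y,y'\}$, and the inequality follows from the definition of $y\wedge y'$.

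The substantive step is the reverse inequality $y\wedge y'\leq y\wedge(\wwidehat{y}\ominus y)$, for which I would invoke Proposition~\ref{modyjem}. Since $\wwidehat{y}\in\Sh(E)$ and $\wwidehat{y}=y\oplus(\wwidehat{y}\ominus y)$, the implication (i)$\Longrightarrow$(ii) of that proposition, applied with $v:=\wwidehat{y}$, $w:=y$, $z:=\wwidehat{y}\ominus y$ and test element $t:=y\wedge y'$, says that if $t\leq w=y$ and $t\leq w'=y'$ then $t\leq z=\wwidehat{y}\ominus y$. Both hypotheses are immediate from the definition of the meet, so $y\wedge y'\leq\wwidehat{y}\ominus y$. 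Combined with the trivial $y\wedge y'\leq y$, this gives $y\wedge y'\leq y\wedge(\wwidehat{y}\ominus y)$, completing the sandwich.

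There is no genuine obstacle here: the argument is a short direct application of Proposition~\ref{modyjem} (for the nontrivial inclusion), Proposition~\ref{minimaxort} and Corollary~\ref{minimaxsup} (for the existence of meets), together with the elementary observation $\wwidehat{y}\ominus y\leq y'$. Meagerness of $y$ is not actually exploited in the argument, so the statement holds more generally for every $y\in E$ for which $\wwidehat{y}$ exists; the meager case is the only interesting one since for sharp $y$ both sides collapse to $0$.
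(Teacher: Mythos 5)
Your proof is correct, but it takes a different route from the paper's. The paper argues inside a block: since $y\leq\wwidehat{y}$, there is a block $B$ containing both, in which the sharp element $\wwidehat{y}$ is central; distributing $\wwidehat{y}$ over $1=y\oplus y'$ (Statement~\ref{gejzapulm}) gives $\wwidehat{y}\ominus y=y'\wedge_B\wwidehat{y}$, and the equality is then obtained as a single equational chain, with Proposition~\ref{minimaxort} invoked at the end to replace the block meet $(\wwidehat{y}\ominus y)\wedge_B y$ by the global meet. You instead prove the two inequalities separately: the direction $y\wedge(\wwidehat{y}\ominus y)\leq y\wedge y'$ from the elementary fact $\wwidehat{y}\ominus y\leq y'$, and the reverse direction from Proposition~\ref{modyjem} applied to $\wwidehat{y}=y\oplus(\wwidehat{y}\ominus y)$ with test element $y\wedge y'$. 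Both approaches use Proposition~\ref{minimaxort} and Corollary~\ref{minimaxsup} for existence, and both ultimately rest on the same fact (homogeneity plus sharpness of $\wwidehat{y}$), since Proposition~\ref{modyjem} is itself proved by the block-and-centrality argument; but your sandwich sidesteps the slightly delicate passages between $\wedge$ and $\wedge_B$ that the paper's chain has to negotiate, which makes it arguably cleaner. Your closing observation is also accurate and applies equally to the paper's proof: meagerness of $y$ is never used, so the identity holds for every $y\in E$ (for sharp $y$ both sides are $0$).
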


\begin{proof}
The meets exist in virtue of Proposition~\ref{minimaxort}.
Since $y \leq \wwidehat{y}$, there is a block $B$ for which $y,\wwidehat{y}
\in B$.
Now, $\wwidehat{y}=(y \oplus y') \wedge_B \wwidehat{y}=
(y \wedge_B \wwidehat{y}) \oplus (y' \wedge_B \wwidehat{y})=
y \oplus (y' \wedge_B \wwidehat{y})$, which implies
$\wwidehat{y} \ominus y=y' \wedge_B \wwidehat{y}$.
Consequently,
$y \wedge y'=(\wwidehat{y} \wedge y) \wedge y'=
(\wwidehat{y} \wedge_B y') \wedge_B y)=
(\wwidehat{y} \ominus y) \wedge_B y=
(\wwidehat{y} \ominus y) \wedge y$ because
$\wwidehat{y} \ominus y,y$ are orthogonal.
\end{proof}
}

\begin{figure}[!t]
\begin{center}
\quad\quad\quad\quad
\setlength{\unitlength}{1 cm}
\begin{picture}(10,7)
\path(5,0)(0,2)\path(5,0)(2,2)\path(5,0)(4,2)
\path(5,0)(6,2)\path(5,0)(8,2)\path(5,0)(10,2)
\path(5,7)(0,5)\path(5,7)(2,5)\path(5,7)(4,5)
\path(5,7)(6,5)\path(5,7)(8,5)\path(5,7)(10,5)
\path(0,2)(0,5)(2,2)(2,5)(0,2)(6,5)(8,2)(4,5)(2,2)(10,5)(4,2)(2,5)(6,2)(8,5)(8,2)(10,5)(10,2)(8,5)(0,2)
\multiput(0,2)(2,0){6}{\whiten\circle{0.3}}
\multiput(0,5)(2,0){6}{\whiten\circle{0.3}}
\multiput(5,0)(0,7){2}{\whiten\circle{0.3}}
{\scriptsize
\put(3.6,0){\makebox(0,0){${ (0, 0, 0)}$}}
\put(3.6,7){\makebox(0,0){${ (1, 1, 1)}$}}
\put(6,0){\makebox(0,0){$0$}}
\put(6,7){\makebox(0,0){$1$}}
\put(0,1.5){\makebox(0,0){$a\phantom{'}$}}
\put(2,1.5){\makebox(0,0){$b\phantom{'}$}}
\put(4,1.5){\makebox(0,0){$c\phantom{'}
$}}
\put(6,1.5){\makebox(0,0){$d\phantom{'}
$}}
\put(8,1.5){\makebox(0,0){$e\phantom{'}
$}}
\put(10,1.5){\makebox(0,0){$f\phantom{'}
$}}
\put(10,5.5){\makebox(0,0){$a'
$}}
\put(8,5.5){\makebox(0,0){$b'
$}}
\put(6,5.5){\makebox(0,0){$c'
$}}
\put(4,5.5){\makebox(0,0){$d'
$}}
\put(2,5.5){\makebox(0,0){$e'
$}}
\put(0,5.5){\makebox(0,0){$f'
$}}
}
{\scriptsize
\put(-0.25,2){\makebox(0,0)[r]{$ {
(\frac{17}{20},\frac{1}{20},\frac{1}{10})}$}}
\put(1.75,2){\makebox(0,0)[r]{$ { (\frac{1}{20},
\frac{17}{20},\frac{1}{10})}$}}
\put(3.75,2){\makebox(0,0)[r]{$ { (\frac{3}{20},
\frac{19}{20},0)}$}}
\put(5.75,2){\makebox(0,0)[r]{$ { (\frac{19}{20}, \frac{3}{20},
0)}$}}
\put(7.75,2){\makebox(0,0)[r]{$ { (0,
{0},\frac{9}{10})}$}}
\put(9.75,2){\makebox(0,0)[r]{$ { (\frac{1}{10},
\frac{1}{10},\frac{8}{10})}$}}
\put(9.75,5){\makebox(0,0)[r]{$ {
(\frac{3}{20},\frac{19}{20},\frac{9}{10})}$}}
\put(7.75,5){\makebox(0,0)[r]{$ { (\frac{19}{20},
\frac{3}{20},\frac{9}{10})}$}}
\put(5.75,5){\makebox(0,0)[r]{$ { (\frac{17}{20},
\frac{1}{20},1)}$}}
\put(3.75,5){\makebox(0,0)[r]{$ { (\frac{1}{20}, \frac{17}{20},
1)}$}}
\put(1.75,5){\makebox(0,0)[r]{$ { (1, {1},\frac{1}{10})}$}}
\put(-0.25,5){\makebox(0,0)[r]{$ { (\frac{9}{10},
\frac{9}{10},\frac{2}{10})}$}}
}

\end{picture}
\caption{Example \ref{exa_3}}\label{fig_3}
\end{center}
\end{figure}
\begin{example}\label{exa_3}{\rm{}In the  finite  orthoalgebra 
$E=\{0, a, b, c, d, e, f, a', b', $ $c', d', e',$ $f', 1\}$ 
depictured in Figure \ref{fig_3} which is such that 
$E=\Sh(E)$ (hence $E$ is homogeneous, Archimedean and orthocomplete),
finite joins in blocks do not coincide with finite joins in $E$.
One can easily check that 
$E$ is a sub-effect algebra of the MV-effect algebra 
$[0, 1]\times [0, 1]\times [0, 1]$ 
such that 
$$
\begin{array}{l l l}
0\phantom{'}\mapsto (0, 0, 0), & 1\phantom{'}\mapsto (1, 1, 1), &\\
a\phantom{'}\mapsto (\frac{17}{20},\frac{1}{20},\frac{1}{10}), &
b\phantom{'}\mapsto (\frac{1}{20}, \frac{17}{20},\frac{1}{10}),& 
c\phantom{'}\mapsto (\frac{3}{20}, \frac{19}{20},0),\phantom{\mbox{\huge I}}\\
d\phantom{'}\mapsto (\frac{19}{20}, \frac{3}{20}, 0),&
e\phantom{'}\mapsto (0, {0},\frac{9}{10}), &
f\phantom{'}\mapsto (\frac{1}{10}, \frac{1}{10},\frac{8}{10}),\phantom{\mbox{\huge I}}\\
a'\mapsto (\frac{3}{20},\frac{19}{20},\frac{9}{10}), &
b'\mapsto (\frac{19}{20}, \frac{3}{20},\frac{9}{10}),& 
c'\mapsto (\frac{17}{20}, \frac{1}{20},1),\phantom{\mbox{\huge I}}\\
d{'}\mapsto (\frac{1}{20}, \frac{17}{20}, 1),&
e{'}\mapsto (1, {1},\frac{1}{10}), &
f{'}\mapsto (\frac{9}{10}, \frac{9}{10},\frac{2}{10}).\phantom{\mbox{\huge I}}
\end{array}$$ 
Moreover $1=(1,1,1)=a\oplus b\oplus f=a\oplus c \oplus e=b\oplus d\oplus e$. 
This yields that $E$ has only the following blocks: 
$B_1=\{0, a, b, f, a\oplus b=f', a\oplus f=b', b\oplus f=a', 1\}$, 
$B_2=\{0, a, c, e, a\oplus c=e', a\oplus e=c', c\oplus e=a', 1\}$ and 
$B_3=\{0, b, d, e, b\oplus d=e', b\oplus e=d', d\oplus e=b', 1\}$ which are lattice ordered. 
Hence $a\not\comp d$, $b\not\comp c$, 
$c\not\comp d$, $c\not\comp f$,
$d\not\comp f$, $e\not\comp f$. In particular, 
$a$ and $b$ have two different minimal upper bounds, 
$a\oplus b$ and $a\oplus c$.
}
\end{example}

\begin{open}
One question still unanswered is whether,
 if  $A, B$   are two  blocks of $E$ with  
$x, y  \in   A \cap B$,   then 
$x\vee_A y=x\vee_B y$; here $E$ is 
an Archimedean sharply dominating
homogeneous effect algebra fulfilling the condition (W+). 
\end{open}

\section*{Acknowledgements}  J. Paseka gratefully acknowledges Financial Support 
of the  Ministry of Education of the Czech Republic
under the project MSM0021622409 and of Masaryk University under the grant 0964/2009.  
Both authors acknowledge the support by ESF Project CZ.1.07/2.3.00/20.0051
Algebraic methods in Quantum Logic of the Masaryk University.

\end{document}